\newtheorem{thm}{Theorem}%[section]
\newtheorem{lem}[thm]{Lemma}
\newtheorem{prop}[thm]{Proposition}
\newtheorem{defn}[thm]{Definition}
\newtheorem{rem}[thm]{Remark}
\theoremstyle{definition}
\newcommand{\cD}{\mathcal{D}}
\newcommand{\bD}{\mathbb{D}^{n}}
\newcommand{\bS}{\mathbb{S}^{n-1}}
\newcommand{\ghd}[1]{d_{GH}^{#1}}
\newcommand{\hd}[1]{d_{H}^{#1}}
\newcommand{\R}{\mathbb{R}}
\newcommand{\cR}{\mathcal{R}}
\newcommand{\cB}{\mathcal{B}_T}
\newcommand{\eps}{\varepsilon}
\newcommand{\abs}[1]{\left\lvert#1\right\rvert}
\newcommand{\inwb}[1]{\partial_{\mathrm{in}}SM_{#1}}
\newcommand{\inwd}[1]{\partial_{\mathrm{in}}S\bD_{#1}}
\newcommand{\outwb}{\partial_{\mathrm{out}}SM}
\newcommand{\pM}{{\partial M}}
\newcommand{\exit}{\tau_{\mathrm{exit}}}
\def \bfo {\begin {eqnarray*} }
\def \efo {\end {eqnarray*} }
\def \ba {\begin {eqnarray*} }
\def \ea {\end {eqnarray*} }
\def \beq {\begin {eqnarray}}
\def \eeq {\end {eqnarray}}
\def \p {\partial}
\def \bfo {\begin {eqnarray*} }
\def \efo {\end {eqnarray*} }
\def \ba {\begin {eqnarray*} }
\def \ea {\end {eqnarray*} }
\def \beq {\begin {eqnarray}}
\def \eeq {\end {eqnarray}}
\def \p {\partial}
\begin{document}

	\title[Travel time inverse problems on simple Riemannian manifolds]{Three travel time inverse problems on simple Riemannian manifolds}
	
	\author[J. Ilmavirta]{Joonas Ilmavirta}
\address{Department of Mathematics and Statistics\\University of Jyv\"askyl\"a, Jyv\"askyl\"a,  FI-40014,  Finland   (\tt{joonas.ilmavirta@jyu.fi})}
   
  \author[B. Liu]{Boya Liu}
\address{Department of Mathematics,  North Carolina State University, Raleigh, NC 27695, USA   (\tt{bliu35@ncsu.edu})}
   
  \author[T. Saksala]{Teemu Saksala}
\address{Department of Mathematics,  North Carolina State University, Raleigh, NC 27695, USA   (\tt{tssaksal@ncsu.edu})}
   
%   \subjclass[2020]{53C21, 53C24, 53C80, 86A22}
% 53C21 : Differential Geometry- Methods of global Riemannian geometry, including PDE methods; curvature restrictions
% 53C24 : Differential Geometry- Rigidity results
% 53C80 : Differential Geometry- Applications of global differential geometry to the sciences
% 86A22 : Geophysics- Inverse problems in geophysics

% Other options:
% 58J60 : Partial differential equations on manifolds- Relations of PDEs with special manifold structures (Riemannian, Finsler, etc.)
% 35R30 : Miscellaneous topics in partial differential equations- Inverse problems for PDEs

%\keywords{Inverse Problem, Riemannian Geometry, Distance Function}
%	
	\date{\today}

	\begin{abstract}
	We provide new proofs based on the Myers--Steenrod theorem to confirm that travel time data, travel time difference data and the broken scattering relations determine a simple Riemannian metric on a disc up to the natural gauge of a boundary fixing diffeomorphism. Our method of the proof leads to a Lipschitz-type stability estimate for the first two data sets in the class of simple metrics.      
	\end{abstract}
	
		\maketitle
	
	\section{Introduction}
	
We study three geometric inverse problems.
The task is to reconstruct a simple Riemannian manifold from (1) travel time data, (2) travel time difference data, or (3) the broken scattering relations.
A compact Riemannian manifold is called simple if the boundary is strictly convex and the exponential map is a global diffeomorphism.

The third problem has been solved previously for dimension three and higher in general geometry, and we extend the result to dimension 2 in simple geometry.
For the other problems previous results give H\"older or similar stability in great generality, and we show that in simple geometry we improve this to Lipschitz.
In many previously known cases (e.g. uniqueness on simple manifolds for problems~1 and~2) our proofs are elementary due to simple geometry.

If the Earth is modeled as a Riemannian manifold (which is a popular albeit not exactly accurate choice; cf.~\cite{dehoop2020determination, de2020foliated}), these problems correspond simplified versions of problems arising from seismology.
The Earth is full of small-scale inhomogeneities that cause seismic waves to scatter. Seen on the scale of the whole planet, these inhomogeneities such as rocks are best seen as point-like objects of zero size even though a single point is irrelevant from the point of view of the elastic wave equation.
The model for scattering is mesoscopic: 
the scatterers are big enough to cause the waves to scatter but small enough to be considered mere points on the global scale that fill the planet densely.
For the first two problems no such considerations are needed, and the point sources can be either natural (e.g. earthquakes) or artificial (e.g. produced by focusing waves).
Seismic ray paths correspond to geodesics.
We will focus on the geometric problems in this paper and ignore details of the physical model.
We introduce the inverse problems in physical terms below and in mathematical terms in the following subsections.
	
%The Earth is full of small-scale inhomogeneities that cause seismic waves to scatter. Seen on the scale of the whole planet, these inhomogeneities such as rocks are best seen as point-like objects of zero size even though a single point is irrelevant from the point of view of the elastic wave equation. The geometric model we employ is mesoscopic: the scatterers are big enough to cause the waves to scatter but small enough to be considered mere points on the global scale that fill the planet densely. In this paper we consider a highly idealized version of the above geometric mode, a $n$-dimensional simple Riemannian manifold. We study three different inverse problems related to single scattering measurements within this model. 
	
In the first and second inverse problems we record the arrival times of seismic waves, initiated by a dense set of point sources going off at some unknown origin times, measured on a dense set of receivers on the surface of the planet.
%A point source can be natural (e.g. an earthquake as a source of seismic waves) or artificial (e.g. produced by focusing of waves).
The arrival time of a wave is the difference of the \emph{travel time} (distance from the source to receiver) and the origin time.
In our first main result (Theorem~\ref{thm:stability}), we assume that origin times are known and show that the collection of travel times stably determine the isometry class of a simple Riemannian metric.
If the origin time of a seismic event is unknown and we have measured the corresponding arrival times, then by computing the difference of arrival times we obtain a \emph{travel time difference}.
In our second main result (Theorem~\ref{thm:stability_of_TTDD}), we show that the travel time differences stably determine a simple metric up to a Riemannian isometry.  
    
In our third inverse problem, the data consists of the knowledge of ingoing directions on the boundary of the planet and for each such direction the set of all outcoming directions and the associated travel times of all single-scattered geodesics. This data is encoded in the \emph{broken scattering relations}. Our third main result (Theorem~\ref{thm:BSR_to_geometry}) shows that this data determines a simple metric up to a Riemannian isometry.

    % In the absence of scatterers, it is well known~\cite{...}\joonas{Cite Guillemin 76/77 as in the Riemann paper.} that the Cauchy data of the wave equation associated with the Laplace--Beltrami operator determines the scattering relation.
    % Due to the mesoscopic nature of our model the microlocal connection between our geometric problem and any inverse problems for PDEs is less clear.
    % If we take a limit of having the size of finite-sized scatterers go to zero, the amplitudes of the scattered waves vanish; this is precisely why a mesoscopic physical model in the sense described above gives the cleanest mathematical picture.
    % Our focus is on the geometric aspects of the problem. It is common in seismology (see e.g.~\cite{...}\joonas{Cite Cerveny's ray tracing book.} to treat seismic waves not as solutions of a PDE but as point particles that are described geometrically.
    % They travel on curves dictated by wave speed variations and reflect and refract at interfaces. From the physical point of view, modelling the Earth as a Riemannian manifold corresponds to taking only one polarization and allowing elliptic anisotropy.
	
	\subsection{Inverse problem 1: Travel time data}
	
	Let $(M,g)$ be a compact, connected, oriented, and smooth Riemannian manifold with smooth boundary $\p M$. For any points $x, y \in M$, we use the notation $d(x, y)$ for the Riemannian distance between them. The first data set we study in this paper is the \emph{travel time data}.
	
	\begin{defn}
	\label{defn:TTD}
	For every point $p\in M$ its \emph{travel time function} $r_p:\p M \to \R$ is defined by the formula $r_p(z) = d(p, z)$. \emph{The travel time map} of the Riemannian manifold $(M,g)$ is then given by the formula
	\begin{equation}
		\label{eq:map_R}
		\cR:(M,g) \to (C(\p M),\|\cdot\|_\infty)
	\end{equation}
	with $\cR(p) = r_p$.
	The image set 
	$
	\cR(M)\subset C(\p M)
	$
	of the travel time map is called the travel time data of the Riemannian manifold $(M,g)$. 
	\end{defn}
	
	\begin{rem}
	The travel time data of a Riemannian manifold is an unlabelled collection of travel time functions. The locations of point sources related to these functions are unknown.
	\end{rem}
	
	It was showed in \cite[Section 3.8]{Katchalov2001} and~\cite{kurylev1997multidimensional}  that the travel time data determines any compact, connected, oriented, and smooth Riemannian manifold up to an isometry. A corresponding uniqueness result for a partial travel time data of a compact manifold with strictly convex boundary was established in~\cite{pavlechko2022uniqueness}. If the metric is Finslerian, 
	%(see for instance~\cite{shen2001lectures} for the basic theory of Finsler geometry)
	then the travel time data can be used to determine the metric, up to a natural obstruction, in the directions of the tangent bundle corresponding with distance minimizing geodesics that reach the boundary~\cite{dehoop2020determination}. The authors also verified that this is the largest set in which a Finsler metric can be determined from its travel time data. Approximate reconstruction of the Riemannian manifold and H\"older-stability of the travel time data, under certain geometric bounds, have been studied in  \cite{de2021stable, katsuda2007stability}. 
	
	In the present paper we study the uniqueness and Lipschitz-stability of the travel time data on simple Riemannian manifolds.
	%A Riemannian manifold is called simple if it is compact with strictly convex boundary (in the sense of definiteness of the second fundamental form) and the exponential map at every point is a diffeomorphism onto the manifold in its maximal domain of definition.
	It is important for our proofs that on a simple manifold there are no conjugate points, and consequently any two points on the manifold, including the boundary, are joined by a unique geodesic depending smoothly on the endpoints.
	
	We will make use of the Myers--Steenrod theorem: ``Every distance-preserving map between two connected Riemannian manifolds is actually a smooth isometry of Riemannian manifolds  \cite{myers-steenrod,palais-myers-steenrod}.''
	This result is true also on simple manifolds with boundary \cite[Lemma~29]{de2021stable}.
	In Proposition~\ref{prop:isometry} of this paper we use this theorem to show that the travel time map on simple Riemannian manifolds is a metric isometry and provide a new, short proof for Proposition~\ref{prop:TTD_to_geom} for the aforementioned uniqueness result in the class of simple Riemannian manifolds.
	Our first main result (Theorem~\ref{thm:stability}) is a Lipschitz-type stability result for the travel time data on simple Riemannian manifolds.

	\subsection{Inverse problem 2: Travel time difference data}
	
	The second data set studied in this paper is the \emph{travel time difference data}. 
	
	\begin{defn}
    \label{def:travel_time_diff}
    The travel time difference function of a point $p \in M$ is the function
    \[
    \begin{split}
    &D_p\colon \p M \times \p M \to \R,
    \\&
    D_p(z,w)=d(p,z)-d(p,w).
    \end{split}
    \]
    Then the \emph{travel time difference map} and the travel time difference data of the Riemannian manifold $(M,g)$ are
    \[
    \cD\colon (M,g) \to (C(\p M\times \p M),\|\cdot\|_\infty)
    \]
    with $\cD(p)=\frac{1}{2}D_p$,
    and its image set
    \[
    \cD(M)\subset C(\p M \times \p M),
    \]
    respectively.
    \end{defn}
    
    The factor $\frac12$ in $\cD(p)=\frac{1}{2}D_p$ is convenient in making the map $\cD$ an isometry.

    This data was introduced in~\cite{LaSa}, where instead of doing boundary measurements, the travel time difference $D_p(z,w)$ was measured for every source point $p \in N$ between any receiver points $z,w \in F$, where $F\subset N$ contains an open subset of a closed Riemannian manifold $(N,g)$. The authors showed that the knowledge of the metric on $F$, in conjunction with the travel time difference data, determine the isometry type of $(N,g)$. This result was later extended for the boundary measurements on compact manifolds in~\cite{de2018inverse}. On complete manifolds the uniqueness and a non-quantitative stability of the travel time difference data was obtained in \cite{ivanov2018distance, ivanov2022distance}. 
    
    In the present paper we revisit the travel time difference data on simple Riemannian manifolds. We provide a new short proof for the uniqueness of the geometry from this data (Proposition~\ref{prop:TTDD_to_geom}) by utilizing the Myers--Steenrod Theorem. Our second main result is a Lipschitz-type stability for the travel time difference data on simple manifolds (Theorem~\ref{thm:stability_of_TTDD}).

\subsection{Inverse problem 3: Broken scattering relations}

	Our third main result is the uniqueness result for the \emph{broken scattering relations} on simple manifolds.
	%Let us define these relations next.
	Let~$TM$ and~$SM$ be the tangent and the unit sphere bundles of $(M,g)$, respectively.
	The geodesic flow of $(M,g)$ is denoted by
	\[
	\phi\colon SM \times \R \to SM, \quad \phi_{t}(\eta)=(\gamma_\eta(t),\dot{\gamma}_{\eta}(t))
	,
	\]
    and $\gamma_{\eta}$ is the geodesic of $(M,g)$ given by the initial conditions
	$
	(\gamma_\eta(0),\dot{\gamma}_{\eta}(0))=\eta \in SM.
	$
	
	Let $\pi \colon TM \to M$ be the canonical projection to the base points given by $\pi(z,v)=\pi(v)=z$, and let $\nu$ be the inward pointing unit normal vector field on the boundary of~$M$. The boundary of the sphere bundle is the set
	\[
	\p SM= \{v\in SM: \pi(v)\in \p M\}.
	\]
% 	\joonas{Do we want to include the tangential directions? Switching $\ge\leadsto>$ might end up more convenient, so that we don't have to keep ruling tangentials out every time. If we ever want to include the tangent ones, we can just take the closure. (I made the switch.)}
% \joonas{I would prefer to call these $\p_{\mathrm{in}}SM$ and $\p_{\mathrm{out}}SM$, as conventions vary and the sign is not as informative.}\boya{I changed the notation to in and out.}
	We call the set
	\[
	\inwb{} = \{v\in \p SM: \langle v ,\nu \rangle_g >0\}
	\]
	the inward pointing bundle at the boundary. Similarly, we define the outward pointing bundle as
	\[
	\outwb{} = \{v\in \p SM: \langle v ,\nu \rangle_g <0\}.
	\]
% 	\joonas{This used to be embedded in another definition, but it should be separate. We can put this (or the gist of this) in a definition environment if needed.}
	The unit ball bundle of $\partial M$ is then defined by 
	\begin{equation}
		B\partial M
		=
		\{
		\xi\in T\partial M;
		\abs{\xi}<1
		\}.
	\end{equation}
	There are natural diffeomorphisms
% 	\joonas{Indexed by "in/out" rather than $\pm$?} 
    \[
    \begin{split}
    N_{\mathrm{in}}&\colon\inwb{} \to B\p M \quad \text{and}\\ N_{\mathrm{out}}&\colon\outwb{} \to B\p M
    \end{split}
    \]
    given by the fiberwise projection $N_{\mathrm{in/out}}(\xi)=\xi-\langle \xi, \nu\rangle_g \nu$ in the direction of the normal vector~$\nu$.
	The point of these maps is to identify the bundle $B\partial M$, which is intrinsic to $\partial M$, with the bundles $\inwb{}$ and $\outwb{}$ which in principle carry information of how the boundary sits within the unknown manifold~$M$.
% 	which know how $\partial M$ sits inside $M$ and is directly related to the geodesic flow.
	%\joonas{I think it's cleanest to formulate our data and theorems in terms of this ball bundle. We can pretty freely ignore the identifications in the proofs.}
	
	We are ready to define the broken scattering relations.
	\begin{defn}
		\label{def:BSR}
		For each $T>0$, the \emph{broken scattering relation} $\cB$ on $B\p M$ is such that the vectors $v_1,v_2 \in B\pM$ satisfy $v_1 \cB v_2$ if and only if there exist two numbers $t_1,t_2>0$ for which $t_1+t_2=T$ and $\pi(\phi_{t_1}(N_{\mathrm{in}}^{-1}v_1))=\pi(\phi_{t_2}(N_{\mathrm{in}}^{-1}v_2))$.
		
		The family $\{\cB: \: T>0\}$ of relations is called the broken scattering relations of Riemannian manifold $(M,g)$. 
	\end{defn}
	
	\begin{rem}
	    We emphasize that a study of broken scattering relations requires the knowledge of the unit ball bundle, which is equivalent to knowing the first fundamental form of $\p M$.  
	\end{rem}
	
	Our third main result (Theorem~\ref{thm:BSR_to_geometry}) shows that the broken scattering relations determine any simple Riemannian manifold up to a Riemannian isometry. A stronger version of this result (for $\dim (M)\geq 3$) has been presented originally in~\cite{kurylev2010rigidity}, where the authors only assumed that $(M,g)$ is a smooth compact Riemannian manifold with smooth boundary. In~\cite{de2020foliated} this result was extended to cover reversible Finsler metrics that satisfy a convex foliation condition.
	
	Up to our best knowledge, Theorem~\ref{thm:BSR_to_geometry} of this paper is novel in the two dimensional case. As in~\cite{kurylev2010rigidity}, our proof is based on a reduction step (Proposition~\ref{prop:BSR_to_travel_times}) to the travel time data. Due to simplicity, we can provide a new proof for the reduction. This and our Proposition~\ref{prop:TTD_to_geom} yield a novel streamlined proof for the uniqueness of the broken scattering relations for simple metrics of all dimensions.

	%\joonas{Define also: The usual unbroken scattering relation.}%
	
	%\joonas{Define also: The travel time data and the map $R\colon M\to C(\Gamma)$.}%
	
% 	\joonas{Define (or perhaps just describe briefly): H distance, GH distance, labeled GH distance.}
	
% 	\joonas{Rewrite definitions in terms of a ball bundle on the boundary? If we only know $\Gamma=\partial M$, we don't have access to the interior, but we can replace $\inwb{}$ with $B\Gamma=\{v\in T\Gamma;\abs{v}<1\}$. This is a ball bundle on a closed manifold, not a sphere bundle on a manifold with boundary. We can eventually work with the inward pointing definition, but it is less convenient for formulating a uniqueness result. This whole idea is worth explaining.}
	
% 	\joonas{Are all manifolds and boundaries smooth? This is fine with me, and it simplifies matters if we state this convention once and for all at the top. We should also state that the dimension is always $\geq2$ and denoted by $n$. (Some of this should be repeated in the main results so that they parse in isolation.)}
	
	\subsection{Outline of the paper}
	In Sections~\ref{sec:TTD}, \ref{sec:TTDD} and~\ref{Sec:BSR} of this paper, we consider the travel time data, travel time difference data, and broken scattering relations, respectively.
	In Section~\ref{sec:DITTD} we consider a diffeomorphism invariant travel time and travel time difference data and provide a stability result for them. 
	
\subsection*{Acknowledgements}

JI was supported by the Academy of Finland (projects 
\\
351665 and 351656). TS was supported by the National Science Foundation under grant DMS-2204997. We thank Jonne Nyberg and Antti Kykk\"anen for discussions.

	\section{Uniquenss and Stability of the Travel Time Data}
	\label{sec:TTD}

% 	\joonas{These are quick drafts of what we could say. I like having the main results listed with minimal explanations in between; they should explain themselves. I promoted one to be the theorem and left the rest as propositions. I included short titles for the results so that the reader gets a quick first impression before starting to read. The titles could use abbreviations if we introduce them in the definitions or in the beginning of this subsection: BSR, SR, BDF.}
	
	We recall that every simple Riemannian manifold is diffeomorphic to the closed unit ball $\bD$ of $\R^n$. Thus, from here onwards we study simple metrics on $\bD$.

	\begin{prop}
	\label{prop:isometry}
		Let $g$ be a simple Riemannian metric on $\bD$. The travel time map $\cR$, given by formula \eqref{eq:map_R}, is a metric isometry.
	\end{prop}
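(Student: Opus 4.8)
The plan is to verify directly that $\cR$ preserves distances, i.e.\ that
\[
\|\cR(p)-\cR(q)\|_\infty = \sup_{z\in\p\bD}\abs{d(p,z)-d(q,z)} = d(p,q)
\]
for all $p,q\in\bD$. Once this is established, injectivity is automatic (if $\cR(p)=\cR(q)$ then the supremum is $0$, forcing $d(p,q)=0$ and hence $p=q$), and $\cR$ is a metric isometry onto its image $\cR(\bD)$. I would prove the two inequalities separately.

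For the upper bound $\|\cR(p)-\cR(q)\|_\infty\le d(p,q)$, I would simply invoke the triangle inequality: for every $z\in\p\bD$ we have $\abs{d(p,z)-d(q,z)}\le d(p,q)$, and taking the supremum over $z$ gives the claim. This direction uses nothing beyond the fact that $d$ is a genuine metric.

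The lower bound is where the hypothesis of simplicity enters, and it is the heart of the argument. Given $p,q\in\bD$ with $p\neq q$, I would take the unique unit-speed minimizing geodesic from $q$ to $p$ and continue it past $p$ in the same direction until it first meets the boundary at a point $z\in\p\bD$; such an exit point exists and is reached in finite time because $\bD$ is compact with strictly convex boundary. The crucial claim is that the resulting geodesic from $q$ to $z$ is still minimizing, so that
\[
d(q,z) = d(q,p) + d(p,z).
\]
Rearranging yields $\abs{d(q,z)-d(p,z)} = d(p,q)$, so the supremum defining $\|\cR(p)-\cR(q)\|_\infty$ is attained and equals $d(p,q)$. (If $p\in\p\bD$ one may simply take $z=p$, and the case $p=q$ is trivial.)

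The step I expect to require the most care is justifying the additivity $d(q,z)=d(q,p)+d(p,z)$, i.e.\ that the extended geodesic through $p$ to $z$ is globally distance-minimizing. This is precisely where all parts of simplicity must be used: the absence of conjugate points together with the global diffeomorphism property of the exponential map guarantees that every geodesic segment between two points of $\bD$ is the unique minimizer joining its endpoints, while strict convexity of the boundary ensures the geodesic actually exits. Everything else reduces to the triangle inequality and the definition of $\cR$.
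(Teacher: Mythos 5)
Your proposal is correct and follows essentially the same route as the paper: the triangle inequality gives the $1$-Lipschitz bound, and the reverse inequality is obtained by extending the minimizing geodesic through the two points to a boundary exit point $z$, where additivity of distances along the (globally minimizing, by simplicity) geodesic gives $\abs{d(q,z)-d(p,z)}=d(p,q)$. Your extra care in justifying that the extended segment remains minimizing is exactly the point the paper leaves implicit.
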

	\begin{proof}
	By triangle inequality the map $\cR$ is easily seen to be $1$-Lipschitz. To prove the reverse estimate, let $x,y \in \bD$. Since $g$ is simple, there exists a unique distance minimizing geodesic $\gamma$ connecting the points $x$ and $y$ to some boundary point $z \in \bS$.
	
	Since $\gamma$ exits $\bD$ at $z$, and all three points $x,y,z$ are on the same distance minimizing curve $\gamma$, we have
	\[
	|r_x(z)-r_{y}(z)|=|d(x,z)-d(y,z)|=d(x,y).
	\]
    Thus we have verified the equality
	\[
	\|\cR(x)-\cR(y)\|_\infty=d(x,y) \quad \text{ for all } x,y \in \bD.
	\]
	The proof is complete.
    %
    % Assume first that $x,y \in \bD$ are interior points. Since $g$ is simple the boundary distance functions $r_x,r_y\colon \bS \to \R$ are smooth, and thus the extreme points of the smooth function $r_x-r_y$ are also its critical points. Moreover, the differential of the function $r_x-r_y$ vanishes exactly at those two points $z_1,z_2 \in \bS$ that are the terminal points of the unit speed distance minimizing geodesic $\gamma$ connecting $x$ to $y$. Without loss of generality we get that $z_1$ is the maximum point of the function $|r_x-r_y|$ and therefore
    % \[
    % \|r_x-r_y\|_\infty=|r_x(z_1)-r_y(z_1)|=|d(x,z_1)-d(y,z_1)|=d(x,y).
    % \]
    % Here the last equality holds, since all three points $x,y,z_1$ are on the same distance minimizing curve $\gamma$.
    %
    % Let $x \in \bS$ and $y \in \bD$. By triangle inequality we get that
    % \[
    % |r_x(z)-r_y(z)|=|d(x,z)-d(y,z)|\leq d(x,y)=|r_x(x)-r_y(x)|, \: \text{ for all } z \in \bS.
    % \]
    % Thus 
    % \[
    % \|r_x-r_y\|_\infty=d(x,y).
    % \]
	\end{proof}
	
    Since $\bD$ is compact, the previous proposition yields that $\cR(\bD)$ is a compact subset of the Banach space $(C(\bS), \|\cdot\|_\infty)$. In order to compare two such image sets, we next define a notion of distance between them. %In the following definition, we set a distance between the travel time data of two simple metrics. 
	
	\begin{defn}
	\label{defn:Haus_of_travel_time_data}
	We set \textit{the distance of travel time data} of two simple Riemannian metrics $g_1$ and $g_2$ on $\bD$ to be
	\[
	\hd{C(\bS)}(\cR_1(\bD),\cR_2(\bD))\geq 0,
	\]
	where $\hd{C(\bS)}$ is the Hausdorff distance of $C(\bS)$ and the travel time maps $\cR_i$ are as in formula~\eqref{eq:map_R}.
	
	Moreover, we say that the travel time data of the simple Riemannian metrics $g_1$ and $g_2$ on $\bD$ \textit{coincide} if 
	\[
	\hd{C(\bS)}(\cR_1(\bD),\cR_2(\bD))=0.
	\]
	\end{defn}
	
     If $\Phi\colon \bD \to \bD$ is a diffeomorphism whose restriction on $\bS$ is the identity map and $g_1$ is any simple metric of $\bD$, then the pullback metric $g_2:=\Phi^\ast g_1$ is a simple metric on $\bD$ isometric to $g_1$. Thus the equality
	 \[
	 d_2(p,z)=d_1(\Phi(p),\Phi(z))=d_1(\Phi(p),z)
	 \]
	 (valid for all $p \in \bD$  and $z \in \bS$)
	 yields the equations $\cR_2(\bD)=\cR_1(\bD)$ and $\hd{C(\bS)}(\cR_1(\bD),\cR_2(\bD))=0$.
	 
	 In other words, the travel time data is invariant under boundary fixing diffeomorphisms. This is the natural gauge of the problem of determining the metric from its travel time data. In the following proposition we confirm that this is the only obstruction.

	\begin{prop}
	\label{prop:TTD_to_geom}
	If the travel time data of simple Riemannian metrics~$g_1$ and~$g_2$ on~$\bD$  coincide, then the map
	\[
	\cR_2^{-1}\circ \cR_1 \colon (\bD,g_1)\to (\bD,g_2),
	\]
	is a Riemannian isometry whose boundary restriction is the identity map.
	\end{prop}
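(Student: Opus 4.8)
The plan is to reduce everything to the Myers--Steenrod theorem, the only nontrivial input. First I would upgrade the hypothesis $\hd{C(\bS)}(\cR_1(\bD),\cR_2(\bD))=0$ to the exact set equality $\cR_1(\bD)=\cR_2(\bD)$. By Proposition~\ref{prop:isometry} each travel time map $\cR_i$ is a metric isometry, hence continuous and injective; since $\bD$ is compact, both image sets are compact, and in particular closed, subsets of $(C(\bS),\|\cdot\|_\infty)$. For closed subsets of a metric space, vanishing Hausdorff distance forces equality, so $\cR_1(\bD)=\cR_2(\bD)$. Consequently $\cR_2^{-1}$ is defined on all of $\cR_1(\bD)$, and the composite $\cR_2^{-1}\circ\cR_1$ is a well-defined bijection of $\bD$ onto itself.

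Next I would observe that this composite is distance preserving from $(\bD,d_1)$ to $(\bD,d_2)$. Writing the isometry property of $\cR_2$ as $d_2(a,b)=\|\cR_2(a)-\cR_2(b)\|_\infty$ and substituting $a=(\cR_2^{-1}\circ\cR_1)(x)$, $b=(\cR_2^{-1}\circ\cR_1)(y)$, Proposition~\ref{prop:isometry} gives, for all $x,y\in\bD$,
\[
d_2\big((\cR_2^{-1}\circ\cR_1)(x),(\cR_2^{-1}\circ\cR_1)(y)\big)
=\|\cR_1(x)-\cR_1(y)\|_\infty
=d_1(x,y).
\]
A distance-preserving bijection between two connected Riemannian manifolds is a smooth Riemannian isometry by the Myers--Steenrod theorem, in the form valid for simple manifolds with boundary \cite[Lemma~29]{de2021stable}. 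This yields that $\cR_2^{-1}\circ\cR_1\colon(\bD,g_1)\to(\bD,g_2)$ is a Riemannian isometry.

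Finally I would verify that the boundary restriction is the identity, which is the step that actually pins down the gauge. Fix $z\in\bS$ and set $p:=(\cR_2^{-1}\circ\cR_1)(z)$. By definition $p$ is the unique point of $\bD$ with $\cR_2(p)=\cR_1(z)$, that is $d_2(p,w)=d_1(z,w)$ for every $w\in\bS$. Evaluating at $w=z$ gives $d_2(p,z)=d_1(z,z)=0$, so $p=z$. Hence $(\cR_2^{-1}\circ\cR_1)|_{\bS}=\mathrm{id}$.

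I do not expect a genuine obstacle here: the analytic heart of the argument is entirely outsourced to Myers--Steenrod, and simplicity is used only through Proposition~\ref{prop:isometry}. The two points requiring care are the passage from vanishing Hausdorff distance to exact set equality, where compactness of $\bD$ (hence closedness of the images) is essential, and the boundary computation, whose one-line verification relies on the elementary fact that a boundary point's travel time function vanishes exactly at that point.
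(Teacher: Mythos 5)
Your proposal is correct and follows essentially the same route as the paper: set equality of the data images, Proposition~\ref{prop:isometry} to get a bijective metric isometry, Myers--Steenrod to upgrade it to a smooth Riemannian isometry, and the evaluation $d_2(\Psi(z),z)=d_1(z,z)=0$ to fix the boundary. The only difference is that you spell out why vanishing Hausdorff distance between the (compact, hence closed) images forces exact set equality, a point the paper takes directly from its definition of coinciding data.
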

	\begin{proof}
    Due to Definition~\ref{defn:Haus_of_travel_time_data} we have
    \begin{equation}
    \label{eq:data_coincide}
	 \cR_2(\bD)=\cR_1(\bD).
    \end{equation}
    Then by Proposition~\ref{prop:isometry}, the map 
	\[
	\Psi:=\cR_2^{-1}\circ \cR_1\colon (\bD,d_{1}) \to (\bD,d_{2})
	\]
	is a well defined bijective metric isometry. Thus by the Myers--Steenrod theorem, 
	%(See \cite[Lemma 29]{de2021stable} for simple Riemannian manifolds)
	the map $\Psi$ is a smooth Riemannian isometry.
	
% 	By formula \eqref{eq:data_coincide} we have 
% 	\[
% 	d_2(\Psi(p),w)=d_1(p,w), \quad \text{for all } p \in \bD, \: w \in \bS.
% 	\]
	We get from equation \eqref{eq:data_coincide} that for every $z \in \bS$ we have
	\[
	\begin{split}
	d_2(\Psi(z),z)=d_1(z,z)=0,
	\end{split}
	\]
	which ensures that $\Psi$ is the identity at the boundary.
	%Therefore, we have verified the second claim and completed the proof.
	\end{proof}

	To measure how close two compact metric spaces $X$ and $Y$ are to each other, we use the \emph{Gromov--Hausdorff distance}
	\begin{equation}
    \begin{split}
    \ghd{}(X,Y):=
    &
    \inf
    \{
    \hd{Z}(f(X),g(Y));
\\&\qquad
Z\text{ is a metric space},
\\&\qquad
f\colon X\to Z
\text{ and } g\colon Y\to Z
\\&\qquad
\text{ are isometric embeddings}
\}.
\end{split}
\end{equation}
	This distance is zero if and only if the metric spaces $X$ and $Y$ are isometric (see~\cite{gromov2007metric}). We are ready to formulate and prove our first main result:
	
	\begin{thm}[Stability of the Travel Time Data]
	\label{thm:stability}
	Let $n\geq 2$, and let $g_1$ and $g_2$ be two simple Riemannian metrics of $\bD$. Then
	\begin{equation}
		\label{eq:metric_in_eq}
		    \ghd{}((\bD,g_1),(\bD,g_2))
			\leq
			\hd{C(\bS)}(\cR_1(\bD),\cR_2(\bD)).
		\end{equation}
		In particular, if the travel time data for two metrics coincide, then they agree up to a boundary fixing isometry.
	\end{thm}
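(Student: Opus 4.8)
The plan is to extract the inequality directly from the definition of the Gromov--Hausdorff distance, with Proposition~\ref{prop:isometry} supplying the isometric embeddings the definition calls for. Recall that $\ghd{}$ is an infimum over all metric spaces $Z$ admitting isometric embeddings of both $(\bD,g_1)$ and $(\bD,g_2)$; consequently any single admissible choice of $Z$ and of the two embeddings already furnishes an upper bound for $\ghd{}$. The point is that Proposition~\ref{prop:isometry}, applied to each metric $g_i$ on its own, states exactly that the travel time map
\[
\cR_i\colon (\bD,g_i)\to (C(\bS),\|\cdot\|_\infty)
\]
is distance preserving, hence an isometric embedding of $(\bD,g_i)$ into the Banach space $C(\bS)$ (injectivity being automatic from the distance-preserving property).

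First I would take the common target $Z=C(\bS)$ together with the two embeddings $f=\cR_1$ and $g=\cR_2$. Feeding this admissible triple into the defining infimum yields at once
\[
\ghd{}((\bD,g_1),(\bD,g_2))\leq \hd{C(\bS)}(\cR_1(\bD),\cR_2(\bD)),
\]
which is exactly \eqref{eq:metric_in_eq}. No genuine optimization over $Z$ is required: a single well-chosen common space suffices, and all the analytic work has already been packed into Proposition~\ref{prop:isometry}.

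For the concluding assertion, if the two data sets coincide then the right-hand side vanishes and so $\ghd{}((\bD,g_1),(\bD,g_2))=0$, whence the two compact spaces are isometric by the standard characterization of the Gromov--Hausdorff distance. To sharpen this to the stated boundary-fixing Riemannian isometry I would simply quote Proposition~\ref{prop:TTD_to_geom}, which gives that $\cR_2^{-1}\circ\cR_1$ is a Riemannian isometry $(\bD,g_1)\to(\bD,g_2)$ equal to the identity on $\bS$. Because the whole argument reduces to substituting a concrete pair of embeddings into the definition, there is essentially no obstacle here; the only thing to watch is that Proposition~\ref{prop:isometry} must be invoked for each metric separately, producing two isometric embeddings into the \emph{same} space $C(\bS)$.
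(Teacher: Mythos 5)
Your proposal is correct and is exactly the paper's argument: Proposition~\ref{prop:isometry} makes $\cR_1$ and $\cR_2$ isometric embeddings into the common space $C(\bS)$, so the definition of the Gromov--Hausdorff distance as an infimum immediately gives \eqref{eq:metric_in_eq}, and the ``in particular'' clause follows from Proposition~\ref{prop:TTD_to_geom}. The paper states this in one sentence; your write-up just spells out the same substitution.
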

	\begin{proof}
	Inequality \eqref{eq:metric_in_eq} follows from Proposition~\ref{prop:isometry} and the definition of Gromov--Hausdorff distance.
	\end{proof}

	\section{Uniqueness and Stability of the Travel Time Difference Data}
	\label{sec:TTDD}

    \begin{prop}
    \label{prop:D_is_isometry}
    If $g$ is a simple metric on $\bD$, then its travel time difference map~$\cD$, given in Definition~\ref{def:travel_time_diff}, is a metric isometry.
    \end{prop}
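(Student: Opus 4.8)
The plan is to follow the template of Proposition~\ref{prop:isometry} and verify directly that $\|\cD(x)-\cD(y)\|_\infty = d(x,y)$ for all $x,y\in\bD$, since $\cD$ is then automatically injective and distance preserving. The first step is to simplify the norm. Fix $x,y\in\bD$ and set $f(z):=d(x,z)-d(y,z)$ for $z\in\bS$. Because $\cD(p)=\tfrac12 D_p$, a one-line rearrangement gives
\[
\cD(x)(z,w)-\cD(y)(z,w)=\tfrac12\bigl(f(z)-f(w)\bigr),
\]
so that
\[
\|\cD(x)-\cD(y)\|_\infty=\tfrac12\sup_{z,w\in\bS}\abs{f(z)-f(w)}.
\]
This reduces the whole statement to controlling the oscillation of the single function $f$ over the boundary.

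The upper bound is then immediate from the triangle inequality: $\abs{f(z)}=\abs{d(x,z)-d(y,z)}\le d(x,y)$ for every $z\in\bS$, whence $\tfrac12\abs{f(z)-f(w)}\le d(x,y)$ and $\cD$ is $1$-Lipschitz.

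For the reverse inequality I would use simplicity to produce two boundary points saturating this estimate. Since $g$ is simple, the geodesic through $x$ and $y$ extends to a maximal, distance-minimizing geodesic $\gamma\colon[0,L]\to\bD$ with both endpoints $w:=\gamma(0)$ and $z:=\gamma(L)$ on $\bS$, and we may order the parametrization so that $y=\gamma(a)$ and $x=\gamma(b)$ with $0\le a<b\le L$. As every subsegment of a minimizing geodesic is minimizing, distances add along $\gamma$, and I would compute $f(z)=(L-b)-(L-a)=-d(x,y)$ together with $f(w)=b-a=d(x,y)$. Hence $\tfrac12\abs{f(z)-f(w)}=d(x,y)$, which matches the upper bound, so the supremum equals $d(x,y)$ and the claimed isometry identity follows. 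The degenerate case $x=y$ is trivial, as then $\cD(x)=\cD(y)$ and $d(x,y)=0$.

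The only place where simplicity is genuinely needed—and thus the main obstacle—is the existence of this \emph{two-sided} boundary geodesic together with the fact that it is globally minimizing between $w$ and $z$, which is what licenses the additivity $d(x,z)=L-b$ and $d(x,w)=b$. This is precisely the no-conjugate-points and strictly-convex-boundary content of simplicity already invoked in Proposition~\ref{prop:isometry}, now applied at both ends of the geodesic rather than at a single exit point.
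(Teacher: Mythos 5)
Your proof is correct and follows essentially the same route as the paper: the triangle inequality gives the $1$-Lipschitz bound, and the unique maximal minimizing geodesic through $x$ and $y$ with both endpoints $z,w\in\bS$ saturates it, since distances add along that geodesic. Your reduction to the oscillation of $f(z)=d(x,z)-d(y,z)$ and the explicit parametrization are just a more detailed write-up of the paper's one-line computation $\tfrac12\abs{d(x,z)-d(y,z)+d(y,w)-d(x,w)}=d(x,y)$.
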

    \begin{proof}
    First we observe that by triangle inequality the map $\cD$ is $1$-Lipschitz.
    For the reverse estimate, let $x,y\in \bD$. Since $g$ is a simple metric, there exists a unique globally distance minimizing geodesic $\gamma$ that goes through $x$ and $y$, having some endpoints $z,w \in \bS$. Since all four points $x,y,z,w$ are on the same distance minimizing curve, we have
    \[
    |(\cD(x)-\cD(y))(z,w)|=\frac{1}{2}|d(x,z)-d(y,z)+d(y,w)-d(x,w)|=d(x,y).
    \]
    Thus the map $\cD$ is a metric isometry as claimed.
    \end{proof}
	
	\begin{defn}
	\label{defn:Haus_of_travel_time_diff_data}
	We set \textit{the distance of the travel time difference data} of two simple Riemannian metrics $g_1$ and $g_2$ on $\bD$ to be
	\[
	\hd{C(\bS\times \bS)}(\cD_1(\bD),\cD_2(\bD))\geq 0,
	\]
	where $\hd{C(\bS\times \bS)}$ is the Hausdorff distance of the Banach space $(C(\bS\times \bS), \|\cdot\|_\infty)$ and the travel time difference maps $\cD_i$ are as in Definition~\ref{def:travel_time_diff}.
	
	Moreover, we say that the travel time difference data of the simple Riemannian metrics $g_1$ and $g_2$ on $\bD$ \textit{coincide} if 
	\[
	\hd{C(\bS)}(\cD_1(\bD),\cD_2(\bD))=0.
	\]
	\end{defn}
	
	Clearly the travel time difference data is invariant under boundary fixing diffeomorphisms. In the following proposition we show that this is the only obstruction of determining the metric from its travel time difference data.
	
	\begin{prop}
	\label{prop:TTDD_to_geom}
	If the travel time difference data of simple Riemannian metrics $g_1$ and $g_2$ on $\bD$  coincide, then the map
	\[
	\cD_2^{-1}\circ \cD_{1} \colon (\bD,g_1)\to (\bD,g_2),
	\]
	is a Riemannian isometry whose boundary restriction is the identity map.
	\end{prop}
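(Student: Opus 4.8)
The plan is to mirror the proof of Proposition~\ref{prop:TTD_to_geom}, replacing the travel time map by the travel time difference map, and then to supply a new and more delicate argument for the boundary behaviour.

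First I would record that, by Definition~\ref{defn:Haus_of_travel_time_diff_data}, the coincidence of the travel time difference data means $\cD_2(\bD)=\cD_1(\bD)$. By Proposition~\ref{prop:D_is_isometry} each map $\cD_i\colon(\bD,d_i)\to(C(\bS\times\bS),\|\cdot\|_\infty)$ is a metric isometry, hence injective with a well-defined inverse on its image. Consequently the composition
\[
\Psi:=\cD_2^{-1}\circ\cD_1\colon(\bD,d_1)\to(\bD,d_2)
\]
is a well-defined bijective metric isometry. By the Myers--Steenrod theorem $\Psi$ is a smooth Riemannian isometry; in particular, being a diffeomorphism of manifolds with boundary, it satisfies $\Psi(\bS)=\bS$.

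It remains to show that $\Psi$ fixes the boundary pointwise, and this is the step I expect to be the main obstacle: unlike the travel time data, the travel time difference data determines boundary distances only up to an additive constant, so a boundary point cannot be located by evaluating a single function at it. The tool is the intertwining relation $\cD_2(\Psi(p))=\cD_1(p)$, which unravels to
\[
d_2(\Psi(p),z)-d_2(\Psi(p),w)=d_1(p,z)-d_1(p,w)\qquad\text{for all }z,w\in\bS.
\]
Specializing to a boundary source $p=z_0\in\bS$ and setting $w=z_0$, I would obtain
\[
d_2(\Psi(z_0),z)=d_1(z_0,z)+c,\qquad c:=d_2(\Psi(z_0),z_0)\ge 0,
\]
valid for every $z\in\bS$.

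Finally I would exploit $\Psi(\bS)=\bS$, so that $\Psi(z_0)\in\bS$ is an admissible choice of $z$ in the last identity. Taking $z=\Psi(z_0)$ yields $0=d_1(z_0,\Psi(z_0))+c$; since both $c$ and $d_1(z_0,\Psi(z_0))$ are nonnegative, the ambiguous additive constant $c$ must vanish and $d_1(z_0,\Psi(z_0))=0$, which forces $\Psi(z_0)=z_0$. As $z_0\in\bS$ is arbitrary, the boundary restriction of $\Psi$ is the identity, completing the argument. The only genuinely new point compared with the travel time case is precisely this cancellation of the additive constant, which is what makes the boundary-preservation property $\Psi(\bS)=\bS$ indispensable rather than merely convenient.
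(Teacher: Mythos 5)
Your proposal is correct and follows essentially the same route as the paper: Myers--Steenrod gives a smooth Riemannian isometry, and the boundary identity is obtained by evaluating the intertwining relation $\cD_2(\Psi(z_0))=\cD_1(z_0)$ at the pair $(\Psi(z_0),z_0)$, which is exactly the paper's computation $d_2(\Theta(z),z)=-d_1(\Theta(z),z)$ written with the additive constant made explicit. The only difference is expository: you state explicitly that $\Psi(\bS)=\bS$ is needed to make $\Psi(z_0)$ an admissible evaluation point, a fact the paper uses implicitly.
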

	\begin{proof}
	By Definition~\ref{defn:Haus_of_travel_time_diff_data} we have
    \begin{equation}
    \label{eq:Ds_are_the_same}
	\cD_1(\bD)=\cD_2(\bD)
    \end{equation}
	as subsets of the Banach space $C(\bS \times \bS)$.	Therefore, Proposition~\ref{prop:D_is_isometry} implies that the map
	\[
	\Theta := \cD_2^{-1}\circ \cD_{1}\colon (\bD,d_1)\to (\bD,d_2)
	\]
	is a well defined bijective metric isometry. By the Myers--Steenrod theorem, the map $\Theta$ is a smooth Riemannian isometry.
	
	For every $z \in \bS$, equation~\eqref{eq:Ds_are_the_same} yields
	\[
	d_2(\Theta(z),z)=d_2(\Theta(z),z)-d_2(\Theta(z),\Theta(z))=d_1(z,z)-d_1(z,\Theta(z))=-d_1(\Theta(z),z),
	\]
	and so $\Theta$ is the identity on the boundary as claimed.
	\end{proof}
	
% 	\begin{lem}
% 	\label{lem:bdf_in_eq}
% 	Let $\eps>0$, and let $g_1,g_2$ be two Riemannian metrics on $\bD$ satisfying the inequality
% 	\[
% 	\hd{C(\bS\times \bS)}(\cD_1(\bD),\cD_2(\bD))< \eps.
% 	\]
% 	Then their boundary distance functions are $2\eps$-apart in the following sense
% 	\[
% 	|d_1(z,w)-d_2(z,w)|\leq 2 \eps, \quad \text{ for all } z,w \in \bS.
% 	\]
% 	\end{lem}
% 	\begin{proof}
% 	Let $z,w\in \bS$. First we observe that for both $i \in \{1,2\}$ the triangle inequality yields
% 	\[
% 	\sup_{p\in M}[\cD_i(p)(z,w)]=\frac{1}{2}\sup_{p\in M}(d_i(p,z)-d_i(p,w))=\frac12 d_i(z,w).
% 	\]
% 	By the assumption of this lemma, for every $p \in \bD$ there is $q \in \bD$ satisfying
% % 	\[
% % 	|\cD_1(p)(z,w)-\cD_2(q)(z,w)|<\eps.
% % 	\]
% % 	Thus
% 	\[
% 	\cD_1(p)(z,w) < \eps+\cD_2(q)(z,w)\leq \eps + \frac{1}{2} d_2(z,w).
% 	\]
% 	After taking the supremum over $p \in \bD$ we arrive in the inequality
% 	\[
% 	\frac{1}{2}d_1(z,w)\leq \eps + \frac{1}{2}d_2(z,w).
% 	\]
% 	Finally after reversing the roles of $g_1$ and $g_2$ we get
% 	\[
% 	|d_1(z,w)-	d_2(z,w)|\leq 2\eps, \quad \text{ for any } z,w \in \bS
% 	\]
% 	as claimed.
% 	\end{proof}
	
	Our second main result is as follows.
	
	\begin{thm}[Stability of the Travel Time Difference Data]
	\label{thm:stability_of_TTDD}
    Let $n\geq 2$, and let $g_1$ and $g_2$ be two simple Riemannian metrics of $\bD$. Then 
	\begin{equation}
		\label{eq:metric_in_eq_2}
		    \ghd{}((\bD,g_1),(\bD,g_2))
			\leq
			\hd{C(\bS\times \bS)}(\cD_1(\bD),\cD_2(\bD)).
		\end{equation}
		In particular, if the travel time difference data for two metrics coincide, then they agree up to a boundary fixing isometry.
	\end{thm}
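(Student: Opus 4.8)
The plan is to mirror the proof of Theorem~\ref{thm:stability} verbatim, replacing the travel time map by the travel time difference map. The entire analytic content of the inequality \eqref{eq:metric_in_eq_2} has already been absorbed into Proposition~\ref{prop:D_is_isometry}, so the theorem itself should reduce to a single application of the definition of the Gromov--Hausdorff distance. First I would recall that, by Proposition~\ref{prop:D_is_isometry}, each travel time difference map
\[
\cD_i \colon (\bD, d_i) \to (C(\bS\times\bS), \|\cdot\|_\infty)
\]
is a metric isometry; in particular it is an isometric embedding of the compact metric space $(\bD, d_i)$ into the Banach space $C(\bS\times\bS)$.

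Next I would invoke the definition of $\ghd{}$ directly. Since the Gromov--Hausdorff distance is the infimum of $\hd{Z}(f(X),g(Y))$ over all metric spaces $Z$ and all isometric embeddings $f,g$ into $Z$, I may simply select the admissible triple $Z = C(\bS\times\bS)$, $f = \cD_1$, $g = \cD_2$. This choice is legitimate precisely because of the previous step, and it immediately yields
\[
\ghd{}((\bD,g_1),(\bD,g_2)) \leq \hd{C(\bS\times\bS)}(\cD_1(\bD),\cD_2(\bD)),
\]
which is exactly \eqref{eq:metric_in_eq_2}. For the concluding assertion I would note that if the two data sets coincide then the right-hand side vanishes, forcing $\ghd{} = 0$ and hence $(\bD,g_1)$ and $(\bD,g_2)$ isometric as metric spaces; the sharper statement that they agree up to a boundary fixing isometry is precisely the content of Proposition~\ref{prop:TTDD_to_geom}, so nothing further is required here.

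I expect no genuine obstacle inside this theorem: once Proposition~\ref{prop:D_is_isometry} is available, the argument is a one-line unpacking of the Gromov--Hausdorff definition, entirely parallel to Theorem~\ref{thm:stability}. The substantive work lives upstream in Proposition~\ref{prop:D_is_isometry}, where the distance-preserving property of $\cD$ is extracted from the existence of a single globally distance-minimizing geodesic through any two interior points together with its two boundary endpoints $z,w\in\bS$. Since that proposition is already established, the only care I would take is to verify that the ambient Banach space in the definition of $\ghd{}$ matches the codomain $C(\bS\times\bS)$ of $\cD_i$, ensuring the two isometric embeddings land in a common metric space.
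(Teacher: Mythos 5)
Your proposal is correct and follows exactly the paper's argument: Proposition~\ref{prop:D_is_isometry} supplies the two isometric embeddings into the common space $C(\bS\times\bS)$, and the definition of the Gromov--Hausdorff distance gives \eqref{eq:metric_in_eq_2} immediately, with the final assertion handled by Proposition~\ref{prop:TTDD_to_geom}. The paper's proof is just a one-line version of the same reasoning.
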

	\begin{proof}
% 	We use a short hand notation for the Hausdorff distance 
% 	\[
% 	A:=\hd{C(\bS\times \bS)}(\cD_1(\bD),\cD_2(\bD)).
% 	\]
% 	Let $\epsilon>0$. Then by Lemma~\ref{lem:bdf_in_eq} we have
% 	\[
% 	|d_1(w_1,w_2)-d_2(w_1,w_2)|\leq 2(A+\eps), \quad \text{ for all } w_1,w_2 \in \bS.
% 	\]
% 	Thus for any $z \in \bS$ we obtain
% 	\[
% 	\begin{split}
% 	& |\cD_1(z)(w_1,w_2)-\cD_2(z)(w_1,w_2)| 
% 	\\
% 	&\leq  \; \frac{1}{2}\left(|d_1(z,w_1)-d_2(z,w_1)|+|d_1(z,w_2)-d_2(z,w_2)|\right)
% 	\\
% 	&\leq \; 2(A+\eps), \quad \text{ for all } w_1,w_2 \in \bS.
% 	\end{split}
% 	\]
% 	Since $\eps>0$ was arbitrarily chosen we get
% 	\[
% 	\begin{split}
%     &A+\sup_{z \in \bS}\|\cD_1(\iota(z))-\cD_2(\iota(z))\|_\infty
%     \leq 
%     3A.
% 	\end{split}
% 	\]
% 	Finally 
	Inequality \eqref{eq:metric_in_eq_2} follows from Proposition~\ref{prop:D_is_isometry} and the definition of Gromov--Hausdorff distance.
	\end{proof}

	\section{Uniqueness of the Broken Scattering Relations}
	\label{Sec:BSR}
	
	Suppose that $g_1$ is a simple metric on $\bD$ and $g_2$ is the pullback metric of $g_1$ under some boundary fixing diffeomorphism $\Phi$ of $\bD$. Then the first fundamental forms of these metrics on $\bS$ are the same. The broken scattering relations of these metrics also coincide. % since the map $\Phi$ takes a unit speed geodesic of $g_2$ to a unit speed geodesic of $g_1$. Thus any broken geodesic (a concatenation of two geodesic segments) and their lengths are also preserved under the map $\Phi$.

	In this section we show that two simple metrics of $\bD$ whose first fundamental forms on~$\bS$ and broken scattering relations coincide, agree up to a diffeomorphism fixing~$\bS$. This is obtained via the following reduction step to the travel time data.
	
	\begin{prop}
	\label{prop:BSR_to_travel_times}
	Let $g_1$ and $g_2$ be two simple Riemannian metrics on $\bD$ whose first fundamental forms agree on  $\bS$. If the broken scattering relations of these metric coincide, then their travel time data also agree.
	\end{prop}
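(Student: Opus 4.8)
The plan is to reconstruct the travel time data of a simple metric directly from its broken scattering relations together with the boundary first fundamental form, via a procedure that depends only on these two pieces of data and not on the metric itself. Applying this reconstruction to $g_1$ and $g_2$, whose broken scattering relations and boundary fundamental forms agree, then forces $\cR_1(\bD)=\cR_2(\bD)$. The starting observation is that on a simple manifold any two points are joined by a unique minimizing geodesic, so two distinct maximal geodesics meet at most once. Hence for a pair $v_1,v_2\in B\bS$, identified with inward directions through $N_{\mathrm{in}}$, there is at most one $T$ with $v_1\,\cB\,v_2$, and this $T$ equals $d(z_1,x)+d(z_2,x)$, where $z_i$ is the base point of $v_i$ and $x$ is the unique interior intersection point of the two geodesics (when it exists). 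I would record this single-valued ``sum function'' $(v_1,v_2)\mapsto T(v_1,v_2)$ as the genuine content of the relations.

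The reconstruction then proceeds in three steps. First, recover the boundary distance function: since $T(v_1,v_2)=d(z_1,x)+d(z_2,x)\ge d(z_1,z_2)$, with equality attained by the endpoint directions of the minimizing geodesic between $z_1$ and $z_2$, one gets $d(z_1,z_2)=\min T(v_1,v_2)$, the minimum over related pairs with the prescribed base points. Second, recover for each $v_1$ over $z_1$ the reversed exit datum $\bar v_1$ over the exit point $z_1'$ of $\gamma_1=\gamma_{z_1,v_1}$: it is the unique $w\in B\bS$ with $T(v_1,w)=d(z_1,z(w))$, where $z(w)$ is the base point of $w$, since by the at-most-one-intersection property this equality case forces the geodesics of $v_1$ and $w$ to coincide. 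Third, recover individual leg lengths: if $\gamma_2$ meets $\gamma_1$ at $p$, then because $\bar\gamma_1=\gamma_1$ as a set the geodesic of $v_2$ meets it at the same $p$, so subtracting $T(\bar v_1,v_2)=d(z_1',p)+d(z_2,p)$ from $T(v_1,v_2)=d(z_1,p)+d(z_2,p)$ and using $d(z_1,p)+d(z_1',p)=d(z_1,z_1')$ gives $d(z_1,p)=\frac{1}{2}\big(d(z_1,z_1')+T(v_1,v_2)-T(\bar v_1,v_2)\big)$. This value is independent of the auxiliary $v_2$ and pins down $p=\gamma_1(s)$ by its arclength $s=d(z_1,p)$ from $z_1$.

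With these tools the travel time function of an interior point is assembled as follows. Fix $(z_1,v_1)$ and a length $s$, so $p=\gamma_1(s)$. For each boundary point $z_3$ there is, by simplicity, a unique inward $v_3$ over $z_3$ whose geodesic passes through $p$; I detect it as the unique $w$ over $z_3$ whose leg length from the third step equals $s$ (uniqueness again from at-most-one intersection), and then $r_p(z_3)=d(z_3,p)=T(v_1,v_3)-s$. Letting $(z_1,v_1,s)$ range recovers $r_p$ for every interior $p$, while boundary points contribute the already recovered boundary distance functions, so the output is exactly $\cR(\bD)$. Every ingredient -- the sum function $T$, the boundary distance function, the reversed exit data, and the leg lengths -- is built solely from the broken scattering relations and the boundary first fundamental form, so the reconstruction yields the same set for $g_1$ and $g_2$, giving $\cR_1(\bD)=\cR_2(\bD)$.

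I expect the main obstacle to be the third step: upgrading the additive sums $d(z_1,x)+d(z_2,x)$, which by themselves would only yield the boundary distance function (and boundary rigidity for simple metrics is hard and largely open), into the individual leg lengths that anchor each $r_p$ as an honest function on $\bS$ rather than one known merely up to an additive constant. The device that breaks this degeneracy is the metric-independent recovery of the reversed exit vector $\bar v_1$, after which the identity $d(z_1,p)+d(z_1',p)=d(z_1,z_1')$ does the rest. Secondary technical points, all controlled by simplicity, are the existence of the relevant interior intersections, the independence of the leg formula from the auxiliary vector, and careful bookkeeping of the identification $B\bS\cong\inwd{}$ through $N_{\mathrm{in}}$, whose normal component is metric dependent even though the tangential datum recorded in $B\bS$ is not.
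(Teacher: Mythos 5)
Your proposal is correct, and its overall architecture matches the paper's: both reduce the relations to the single-valued sum function $T(v_1,v_2)=t_1+t_2$ (well defined off the diagonal because distinct geodesics on a simple manifold meet at most once), recover the reversed exit vector of each geodesic, use it to split $T$ into the two individual legs by an affine identity, and then sweep a family of reference geodesics, parametrized by a base point, a direction and an arclength $s$, to assemble the set $\cR(\bD)$. The one step you handle genuinely differently is the recovery of the scattering relation. The paper goes through Lemmas~\ref{lem:geo_intersect}--\ref{lem:bsr_det_sr}: it extracts the exit time from $\exit(v)=\frac12\sup\{T: v\,\cB\,v\}$ and characterizes when two geodesics share a trace by comparing the sets $V(v)$ of directions they meet, which rests on a strict-convexity argument producing a short almost-tangent geodesic near an endpoint of one curve that misses the other. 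You instead first recover the boundary distance function as $\min T$ over directions with prescribed base points, and then identify $\bar v_1$ as the unique $w$ attaining equality in $T(v_1,w)\ge d(z_1,z(w))$; note that what forces the two geodesics to merge there is the smoothness of minimizers (a broken minimizing curve must be an unbroken geodesic), with the at-most-one-intersection property only finishing the identification. Your route is more self-contained in simple geometry and bypasses Lemma~\ref{lem:geo_intersect} entirely, while the paper's $V(v)$-based route is the one that generalizes beyond the simple setting, where boundary points need not be joined by geodesics realizing $T$ as a distance. Two small points to tidy: $\mathcal{T}(v,v)$ is an interval rather than a singleton, so ``at most one $T$'' holds only for pairs with distinct traces (the paper defines $T$ on the diagonal separately via the exit time; this is harmless for you since your argument never needs the diagonal values), and the final formula $r_p(z_3)=T(v_1,v_3)-s$ should be supplemented at the two endpoints of the reference geodesic itself, where $r_p$ equals $s$ and $d(z_1,z_1')-s$ directly.
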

	
	Proposition~\ref{prop:BSR_to_travel_times} will be proved at the end of this section with the help of auxiliary lemmas.
	We will first present and prove our third main result. 
	
	\begin{thm}[Uniqueness of the Broken Scattering Relations]
	\label{thm:BSR_to_geometry}
	Let $n\geq 2$, and let~$g_1$ and~$g_2$ be two simple Riemannian metrics of $\bD$ whose first fundamental forms on $\bS$ agree. If the broken scattering relations of these metric coincide, then there exists a smooth Riemannian isometry 
	$
	\Psi\colon (\bD,g_1)\to (\bD,g_2)
	$
	whose boundary restriction $\Psi\colon \bS \to \bS$ is the identity map.
	\end{thm}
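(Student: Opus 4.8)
The plan is to obtain the theorem by chaining together the two propositions already arranged for exactly this purpose, so that the theorem itself becomes a short corollary and all the genuine geometry is concentrated in the reduction step. First I would invoke Proposition~\ref{prop:BSR_to_travel_times}, whose hypotheses are precisely those of the theorem: the first fundamental forms of $g_1$ and $g_2$ agree on $\bS$---which is what makes the intrinsic ball bundle $B\bS$, and hence the relations $\cB$, genuinely comparable across the two metrics---and the broken scattering relations coincide. Its conclusion is that the travel time data agree, that is $\hd{C(\bS)}(\cR_1(\bD),\cR_2(\bD))=0$, which by Definition~\ref{defn:Haus_of_travel_time_data} means $\cR_1(\bD)=\cR_2(\bD)$ as compact subsets of $C(\bS)$.

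With coinciding travel time data in hand, I would apply Proposition~\ref{prop:TTD_to_geom} directly. It provides the map $\Psi:=\cR_2^{-1}\circ\cR_1\colon(\bD,g_1)\to(\bD,g_2)$, shows via Proposition~\ref{prop:isometry} and the Myers--Steenrod theorem that $\Psi$ is a smooth Riemannian isometry, and shows that its restriction to $\bS$ is the identity. Taking this $\Psi$ completes the proof, so the body of the argument is essentially these two invocations.

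The real obstacle therefore lies not in the theorem but in the reduction Proposition~\ref{prop:BSR_to_travel_times}, and this is where I would concentrate the work. The geometric idea is that the scattering point of a single-scattered geodesic plays the role of an interior point source. Fix an interior point $p\in\bD$; if $v_1\cB v_2$ is realized by scattering at $p$ after times $t_1,t_2$ with total time $T$, then---because $g$ is simple, so that the geodesic joining a boundary point to $p$ is unique and distance minimizing---one has $t_i=d(z_i,p)$ with $z_i=\pi(v_i)\in\bS$, whence $T=r_p(z_1)+r_p(z_2)$. Specializing to $v_1=v_2$ yields the broken path $z\to p\to z$ of length $2r_p(z)$ and hence $r_p(z)=T/2$; letting $p$ range over the interior recovers every interior travel time function, and hence (by continuity of $\cR$ and density of the interior) the full data set $\cR(\bD)$.

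The delicate points in that reduction, which I expect to be the main difficulty, are threefold: that these distances can be extracted from the relations $\cB$ alone, with no a priori labelling of the scattering points; that every interior point and every boundary distance is actually realized by some single-scattered geodesic of the required type; and that the two reconstructions---each naturally living on $B\bS$---can be identified with one another, which is exactly where the agreement of the first fundamental forms on $\bS$ is used. Simplicity is what guarantees throughout that the observed segment lengths $t_i$ coincide with the minimizing distances $d(z_i,p)$ rather than with lengths of merely critical geodesics.
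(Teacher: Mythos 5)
Your proof of the theorem itself is exactly the paper's: it is the two-line corollary obtained by feeding Proposition~\ref{prop:BSR_to_travel_times} into Proposition~\ref{prop:TTD_to_geom}, and your sketch of where the real work lies (extracting interior travel times from single-scattered geodesics, using simplicity to identify segment lengths with distances, and using the shared first fundamental form to identify the two copies of $B\bS$) matches the paper's actual route through Lemmas~\ref{lem:bsr_det_sr} and~\ref{lem:travel_time_bsr}. No gaps as far as the stated theorem is concerned.
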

	
	\begin{proof}
	By Proposition~\ref{prop:BSR_to_travel_times} the travel time data of the metrics $g_1$ and $g_2$ coincide, and we can choose the Riemannian isometry $\Psi$ as in Proposition~\ref{prop:TTD_to_geom}.
	\end{proof}
	
    The \emph{exit time function} $\exit\colon S\bD\to[0,\infty]$ of $(\bD,g)$ is defined so that the maximal domain of definition for the geodesic with initial conditions $\eta\in S\bD$ is $[-\exit(-\eta),\exit(\eta)]$. 
    If~$g$ is a simple metric,
    %If the boundary is strictly convex with respect to $g$,
    the exit time is always finite and continuous in $\eta \in S\bD$ (see e.g. \cite[Lemma 13]{de2020foliated}). Moreover, the boundary point $\gamma_{v}(\exit(\eta))$ is the first point where the geodesic $\gamma_v$ hits the boundary.

    \begin{lem}
	Let $g$ be a simple Riemannian metric of $\bD$.
		\label{lem:geo_intersect}
		Let $v_1, v_2 \in \inwd{}$ be such that 
% 		\[
% 		\exit(v_1),\exit(v_2)<\infty
% 		\]
% 		and
% 		\[
% 		(\gamma_{v_1}(0), \gamma_{v_1}(\exit(v_1)))\ne (\gamma_{v_2}(0), \gamma_{v_2}(\exit(v_2))).
% 		\]
		the geodesics $\gamma_{v_1}$ and $\gamma_{v_2}$ do not have exactly the same endpoints on the boundary. There exists $v_3 \in \inwd{}$ such that $\gamma_{v_3}$ intersects $\gamma_{v_1}$ but not $\gamma_{v_2}$.
	\end{lem}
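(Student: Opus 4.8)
The plan is to translate the hypothesis into the statement that $\gamma_{v_1}$ and $\gamma_{v_2}$ are genuinely distinct geodesics, to pin down an interior point of $\gamma_{v_1}$ off $\gamma_{v_2}$, and then to rotate a geodesic through that point until it dodges $\gamma_{v_2}$ entirely. The underlying reductions all rest on simplicity: since $\exp$ is a global diffeomorphism, any two points of $\bD$ (boundary points included) are joined by a unique geodesic, so two distinct maximal geodesics meet in at most one point. First I would record that, by this uniqueness, the assumption that $\gamma_{v_1}$ and $\gamma_{v_2}$ fail to have exactly the same endpoints is equivalent to $\gamma_{v_1}\neq\gamma_{v_2}$ as point sets; hence their intersection is at most a single point. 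In particular the (open) interior of $\gamma_{v_1}$, which lies in $\operatorname{int}\bD$ by strict convexity of the boundary, contains a point $p$ with $p\notin\gamma_{v_2}$. It then suffices to produce a maximal geodesic through $p$ avoiding $\gamma_{v_2}$: such a geodesic meets $\gamma_{v_1}$ at $p$, and taking $v_3\in\inwd{}$ to be its inward boundary vector gives the desired $\gamma_{v_3}$.

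Next I would fix this $p$ and study the \emph{shadow} of $\gamma_{v_2}$ seen from $p$. Using that $\exp_p$ is a diffeomorphism onto $\bD$ and that $p\notin\gamma_{v_2}$, the map $q\mapsto \exp_p^{-1}(q)/\abs{\exp_p^{-1}(q)}_g$ is smooth on $\gamma_{v_2}$; let $\Sigma\subset S_p\bD$ be its image, a compact connected subset of the unit tangent sphere at $p$. The point of this construction is the equivalence: the maximal geodesic $\gamma_{(p,w)}$ meets $\gamma_{v_2}$ (at positive or negative time) if and only if $w\in\Sigma\cup(-\Sigma)$. Thus the whole lemma reduces to showing that $\Sigma\cup(-\Sigma)\neq S_p\bD$, after which I simply choose a direction $w$ in the complement and set $\gamma_{v_3}=\gamma_{(p,w)}$.

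The key estimate is that $\Sigma$ contains no pair of antipodal directions. Indeed, if $w$ and $-w$ both lay in $\Sigma$, they would point toward two distinct points $q_1,q_2\in\gamma_{v_2}$ both lying on the single maximal geodesic $\gamma_{(p,w)}$ (one at positive, one at negative time); but the unique geodesic through $q_1$ and $q_2$ is $\gamma_{v_2}$ itself, which would then contain $p$, contradicting $p\notin\gamma_{v_2}$. For $n\geq 3$ this can be used wastefully: $\Sigma$ is the image of a curve, hence has measure zero in the $(n-1)$-sphere $S_p\bD$, so $\Sigma\cup(-\Sigma)$ is a proper subset and a generic $w$ works. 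For $n=2$, $S_p\bD$ is a circle and $\Sigma$ is a connected arc containing no antipodal pair, so its angular width is strictly less than $\pi$; consequently $\Sigma\cup(-\Sigma)$ has total length below $2\pi$ and cannot be the whole circle. In either case a valid $w$, hence $v_3$, exists.

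I expect the main obstacle to be precisely the two-dimensional case: there the crude dimension/measure count fails, and one must genuinely invoke uniqueness of connecting geodesics through the no-antipodal-pair argument to bound the angular size of the shadow $\Sigma$ and conclude that $\Sigma\cup(-\Sigma)$ leaves room for a dodging direction. The remaining steps—continuity and connectedness of the shadow, and the translation between ``$w$ avoids $\Sigma\cup(-\Sigma)$'' and ``$\gamma_{v_3}$ misses $\gamma_{v_2}$''—are routine once $\exp_p$ is known to be a diffeomorphism.
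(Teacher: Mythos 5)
Your proof is correct, but it takes a genuinely different route from the paper's. The paper localizes at the boundary: it picks an endpoint $z\in\bS$ of $\gamma_{v_1}$ that is not an endpoint of $\gamma_{v_2}$ (such a point exists because each geodesic has exactly two boundary endpoints), notes that $\gamma_{v_2}$ is a compact set not containing $z$, and uses strict convexity to produce short, almost boundary-tangent geodesics confined to a small neighborhood of $z$; these miss $\gamma_{v_2}$ and some of them cross $\gamma_{v_1}$ near $z$. That argument is three lines and makes the role of strict convexity completely explicit (matching the paper's remark that the lemma fails on the closed hemisphere of $\mathbb{S}^2$). You instead work at an interior point $p\in\gamma_{v_1}\setminus\gamma_{v_2}$ (which exists because distinct geodesics on a simple manifold meet in at most one point) and analyze the shadow $\Sigma\subset S_p\bD$ of $\gamma_{v_2}$; the no-antipodal-pair observation, combined with a measure-zero count for $n\ge 3$ and an arc-width count for $n=2$, shows $\Sigma\cup(-\Sigma)\neq S_p\bD$. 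This is longer and requires the dimension split, but it proves slightly more: through \emph{every} point of $\gamma_{v_1}\setminus\gamma_{v_2}$ an open, full-measure set of directions yields a geodesic avoiding $\gamma_{v_2}$, and you get to prescribe where $\gamma_{v_3}$ meets $\gamma_{v_1}$. Both proofs rest on simplicity (uniqueness of connecting geodesics, $\exp_p$ a global diffeomorphism); yours leans on that where the paper's leans on strict convexity of the boundary.
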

	\begin{proof}
	Let $z\in\bS$ be and endpoint of $\gamma_{v_1}$ that is not an endpoint of $\gamma_{v_2}$.
	Therefore short geodesics with both endpoints sufficiently close to $z$ will not meet $\gamma_{v_2}$.
	(These geodesics can be thought of as being almost tangent to the boundary.)
	By strict convexity of the boundary at $z$ such geodesics exist, and some of them meet $\gamma_{v_1}$.
	We simply let $v_3$ to be the initial data of one of these geodesics.
	\end{proof}

	\begin{rem}
	Lemma~\ref{lem:geo_intersect} may fail if the boundary is not strictly convex.
	On the closed hemisphere of $\mathbb{S}^2$ every pair of geodesics intersects each other.
	This follows easily from geodesics being great circles, i.e., intersections of the sphere and a plane through the origin.
	The claim of the lemma is, however, true for hemispheres of higher dimension.
    	%By the following example the assumption in Lemma~\ref{lem:geo_intersect}, of the boundary being strictly convex, is necessary.
		%Consider a smooth and compact submanifold $M$ of the unit sphere $\mathbb S^2$, which contains a hemisphere. Clearly this manifold does not have a strictly convex boundary. If $\gamma_1$ and  $\gamma_2$ are any two disjoint geodesics of $M$ then their traces are given by the intersection of $M$ with some planes  $P_1,P_2\subset \R^3$, that contain origin, respectively. Since a hemisphere is contained in $M$, the line $P_1\cap P_2$ intersects the set $M$ at least once. These intersection points are the meeting points of the geodesics $\gamma_1$ and $\gamma_2$. Hence any two disjoint geodesics of $M$ meet each other.
	\end{rem}
%	When Lemma~\ref{lem:geo_intersect} holds, we have two immediate corollaries:
	\begin{lem}
	\label{lem:bsr_two_geo_equal}
	Let $g$ be a simple Riemannian metric on $\bD$. Let $v_1, v_2 \in \inwd{}$. The following two statements are equivalent:
	\begin{enumerate}
	\item We have $V(v_1)=V(v_2)$, where
\begin{equation}
    \label{eq:set_of_focusing_directions}
		V(v_i):=\{w\in B\bS: \text{there is } T>0 \text{ for which } N_{\mathrm{in}}(v_i)\cB w\}.
\end{equation}
	\item Either $v_1=v_2$ or $v_2=-\phi_{\exit(v_1)}(v_1)$.
	\end{enumerate}
	\end{lem}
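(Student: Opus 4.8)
The plan is to prove the two implications separately, after first translating the condition $w\in V(v_i)$ into geometric terms. Unwinding Definition~\ref{def:BSR}, we have $w\in V(v_i)$ precisely when there exist $t_1,t_2>0$ with $\gamma_{v_i}(t_1)=\gamma_{N_{\mathrm{in}}^{-1}(w)}(t_2)$, i.e.\ when the geodesic $\gamma_{N_{\mathrm{in}}^{-1}(w)}$ meets $\gamma_{v_i}$ at a common point reached at strictly positive parameter from both initial vectors. Thus, up to this translation, $V(v_i)$ is the collection of inward boundary directions whose geodesics cross $\gamma_{v_i}$ in the interior, and the lemma asserts that this crossing pattern remembers $\gamma_{v_i}$ exactly as an unoriented curve.

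For the implication $(2)\Rightarrow(1)$, the case $v_1=v_2$ is immediate. If instead $v_2=-\phi_{\exit(v_1)}(v_1)$, I would observe that $v_2$ is the reversed velocity at the exit point of $\gamma_{v_1}$, so that $\gamma_{v_2}(t)=\gamma_{v_1}(\exit(v_1)-t)$; in particular $\gamma_{v_1}$ and $\gamma_{v_2}$ have the same image and the same set of interior points. Since a common interior point $p=\gamma_{v_1}(t_1)$ with $0<t_1<\exit(v_1)$ is also $p=\gamma_{v_2}(\exit(v_1)-t_1)$ with $0<\exit(v_1)-t_1<\exit(v_1)$, any $w$ witnessing $w\in V(v_1)$ through such a point also witnesses $w\in V(v_2)$, and symmetrically. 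Hence $V(v_1)=V(v_2)$.

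For $(1)\Rightarrow(2)$ I would argue the contrapositive. Assume neither alternative in $(2)$ holds. First I would show that then $\gamma_{v_1}$ and $\gamma_{v_2}$ cannot share the same unordered pair of boundary endpoints: on a simple manifold any two boundary points are joined by a unique geodesic, so coinciding endpoints would force $\gamma_{v_1}$ and $\gamma_{v_2}$ to be the same unoriented curve, and comparing the orientation at whichever endpoint $v_2$ emanates from would give exactly $v_1=v_2$ or $v_2=-\phi_{\exit(v_1)}(v_1)$, contradicting the assumption. With distinct endpoints in hand, \lemref{lem:geo_intersect} supplies $v_3\in\inwd{}$ such that $\gamma_{v_3}$ intersects $\gamma_{v_1}$ but not $\gamma_{v_2}$; since the $v_3$ produced there is a short geodesic crossing $\gamma_{v_1}$ at an interior point, $w_3:=N_{\mathrm{in}}(v_3)$ lies in $V(v_1)$, while the absence of any common point with $\gamma_{v_2}$ yields $w_3\notin V(v_2)$. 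Therefore $V(v_1)\neq V(v_2)$.

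The main obstacle I anticipate is the careful bookkeeping in passing between the relation $\cB$ and geodesic intersection, specifically the requirement $t_1,t_2>0$ combined with the behaviour at the two boundary endpoints of the geodesics. Here I would use that on a simple manifold two distinct geodesics meet at most once, for otherwise the two intersection points would be joined by two distinct minimizing geodesics; consequently a geodesic passing through an endpoint of $\gamma_{v_i}$ meets it only there, and the delicate point is whether this single meeting occurs at the initial endpoint (parameter $0$, hence inadmissible) or at the exit endpoint (positive parameter). I would phrase both implications so that only genuine interior crossings are used, which is precisely what \lemref{lem:geo_intersect} delivers, thereby keeping the endpoint cases from interfering with the set equality.
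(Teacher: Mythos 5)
Your proposal is correct and follows essentially the same route as the paper: reduce to the contrapositive of $(1)\Rightarrow(2)$, use simplicity (uniqueness of geodesics between boundary points) to conclude the two geodesics have distinct endpoint sets, and then invoke Lemma~\ref{lem:geo_intersect} to produce a separating direction $v_3$. The extra bookkeeping you supply about positive parameters and interior versus endpoint crossings is a welcome elaboration of details the paper leaves implicit, but it does not change the argument.
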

	\begin{proof}
		Clearly~(2) implies~(1), and it suffices to prove the implication from~(1) to~(2).
		Suppose, towards contradiction, that condition~(2) is not true, so that the traces of geodesics $\gamma_{v_1}$ and $\gamma_{v_2}$ are not the same. Since $g$ is simple, these geodesics cannot have the same endpoints. Whence, by Lemma~\ref{lem:geo_intersect}, there exists $v_3 \in \inwd{}$ such that $\gamma_{v_3}$ intersects $ \gamma_{v_1}$ but does not intersect $\gamma_{v_2}$. This contradicts condition~(1). 
		%This completes the proof of Lemma~\ref{lem:bsr_two_geo_equal}.
	\end{proof}
	
	The broken scattering relations \textit{a priori} record the information whether two 
	\\
	geodesics, starting at the boundary, intersect. Next we show that in the case of simple metrics, more geometric information can be easily 
	extracted from the broken scattering relations.
	
	%We note that if $\eta \in \inwd{}$ and $\xi=\phi_{\exit(\eta)}(\eta)\in \outwd{}$ then clearly the relation 
	%$
	%N_{\mathrm{in}}(\eta)\cB N_{\mathrm{in}}(-\xi),
	%$
	%for $T=\exit(\eta)$, is true. The map taking $\eta$ to $\xi$ is called the \emph{scattering relation}. 
	For us the \emph{scattering relation} is the map $\inwd{}\ni \eta\mapsto-\phi_{\exit(\eta)}(\eta)\in\inwd{}$.
	The negative sign makes the relation map inward pointing vectors to inward pointing vectors.
% 	
% 	\begin{defn}
% 	\label{def:SR}
% 	The \emph{scattering relation} of simple Riemannian metric $g$ on $\bD$ is defined as
% 	\begin{equation}
% 	\begin{split}
%     \sigma
%     &\colon
%     B\bS \to B\bS,
% 	\quad 
% 	\sigma(\eta)
% 	=
% 	N_\mathrm{out}\phi_{\exit(N_\mathrm{in}^{-1}\eta)}(N_{\mathrm{in}}^{-1}\eta).
% 	\end{split}
% 	\end{equation}
% 		%\sigma: \p_+SM \to \p_-SM, \quad  \sigma (x, \xi) = \big(\gamma_{x, \xi}(\exit(x, \xi)), \dot{\gamma}(\exit(x, \xi))\big).
% 	\end{defn}
% 	
	In the subsequent lemma, we show that the broken scattering relations determine the exit time function and the scattering relation on $\inwd{}$. 
	
	\begin{lem}
		\label{lem:bsr_det_sr}
		Let $g$ be a simple Riemannian metric of $\bD$.	The broken scattering relations determine the scattering relation and exit time function on $\inwd{}$.
	\end{lem}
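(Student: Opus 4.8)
The plan is to extract both objects directly from the family $\{\cB:T>0\}$, in each case using the basic feature of simple geometry that every maximal geodesic $\gamma_\eta$ with $\eta\in\inwd{}$ is the unique minimizing geodesic between its two boundary endpoints, hence a simple (non-self-intersecting) curve on $[0,\exit(\eta)]$. Write $S(\eta):=-\phi_{\exit(\eta)}(\eta)$ for the scattering relation and $v:=N_{\mathrm{in}}(\eta)\in B\bS$ for the boundary datum attached to $\eta$.

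First I would recover the scattering relation. For each $\eta\in\inwd{}$ the set $V(\eta)$ of \eqref{eq:set_of_focusing_directions} is read off the broken scattering relations, since by definition it only records which $w\in B\bS$ satisfy $N_{\mathrm{in}}(\eta)\,\cB\,w$ for some $T>0$. By Lemma~\ref{lem:bsr_two_geo_equal} the fibre $\{\eta'\in\inwd{}:V(\eta')=V(\eta)\}$ is exactly the two-element set $\{\eta,S(\eta)\}$: it genuinely has two elements because $\gamma_\eta$ is injective and $\exit(\eta)>0$, so the entry point $\gamma_\eta(0)$ and the exit point $\gamma_\eta(\exit(\eta))$ differ and therefore $S(\eta)\neq\eta$. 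Consequently $S(\eta)$ is determined as the unique member of this fibre distinct from $\eta$, and letting $\eta$ range over $\inwd{}$ determines the scattering relation.

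Next I would recover the exit time from the self-relation $v\,\cB\,v$. By Definition~\ref{def:BSR}, $v\,\cB\,v$ holds for the parameter $T$ precisely when there are $t_1,t_2>0$ with $t_1+t_2=T$ and $\pi(\phi_{t_1}(\eta))=\pi(\phi_{t_2}(\eta))$. Since $t\mapsto\pi(\phi_t(\eta))=\gamma_\eta(t)$ is injective on $[0,\exit(\eta)]$, this equality forces $t_1=t_2=T/2$, which is admissible exactly when $T/2\le\exit(\eta)$; conversely, for every $T\in(0,2\exit(\eta)]$ the choice $t_1=t_2=T/2$ trivially satisfies the requirement. Hence $v\,\cB\,v$ holds if and only if $0<T\le2\exit(\eta)$, and therefore
\[
\exit(\eta)=\tfrac12\sup\{T>0:\ v\,\cB\,v\},
\]
so the exit time function on $\inwd{}$ is determined as well.

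The step I expect to carry the real content, as opposed to bookkeeping, is the geometric input shared by both parts: that in simple geometry the map $t\mapsto\gamma_\eta(t)$ is injective on the closed interval up to the exit time and that the two endpoints are genuinely distinct. Both follow from $\gamma_\eta$ being the unique minimizing geodesic between its endpoints, which simplicity together with strict convexity of $\bS$ guarantees—a self-intersection would yield a strictly shorter competing curve, and coinciding endpoints would give a second geodesic joining a pair of boundary points, each contradicting uniqueness. Once this is in hand, both recoveries follow immediately from the definitions and from Lemma~\ref{lem:bsr_two_geo_equal}.
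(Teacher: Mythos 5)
Your proposal is correct and follows essentially the same route as the paper: the exit time is recovered from the self-relation via $\exit(\eta)=\tfrac12\sup\{T>0:N_{\mathrm{in}}(\eta)\,\cB\,N_{\mathrm{in}}(\eta)\}$, and the scattering relation is read off from the fibres of $\eta\mapsto V(\eta)$ using Lemma~\ref{lem:bsr_two_geo_equal}. You merely reverse the order of the two steps and spell out the injectivity of $t\mapsto\gamma_\eta(t)$ and the distinctness of the endpoints, details the paper leaves implicit.
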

	\begin{proof}
       First we observe that the broken scattering relations determine the exit time function via the equation
       	\[
		\exit(v)=\frac{1}{2}\sup\{T>0: (N_{\mathrm{in}}v) \cB (N_{\mathrm{in}}v)\}, \quad \text{ for any } v \in \inwd.
		\]
       Then we use these relations to find the set $V(v)$, as in equation \eqref{eq:set_of_focusing_directions}, for every $v \in \inwd{}$. Let $v_1,v_2 \in \inwd{}$. Then by Lemma~\ref{lem:bsr_two_geo_equal} we have 
		$
		V(v_1)=V(v_2)
		$
		if and only if $v_2 = v_1$ or $v_2=-\phi_{\exit(v_1)}(v_1)$. Thus, the broken scattering relations determine the scattering relation.
	\end{proof}
	
	\begin{rem}
	    After we have recovered the scattering relation and the exit time function on $\inwd{}$, together known as the \emph{lens relation}, we could reduce the proof of Theorem~\ref{thm:BSR_to_geometry} to the classical \emph{boundary rigidity problem}, which has been solved on simple 2-manifolds~\cite{pestov2005two} and for generic simple metrics in dimensions three and higher~\cite{stefanov2005boundary}. As the broken scattering relations carry more information than the lens relation, we can provide an easier proof for Theorem~\ref{thm:BSR_to_geometry} via Proposition~\ref{prop:BSR_to_travel_times}.
	\end{rem}
	
	\begin{lem}
	\label{lem:travel_time_bsr}
	Let $g$ be a simple Riemannian metric on $\bD$. Let $v_1, v_2 \in B\bS$. Suppose that there exist a number $T>0$ such that 
    $
	v_1\cB v_2.
	$
	Then the broken scattering relations determine the travel times $t_1,t_2>0$ that satisfy the equations
	\[
	T=t_1+t_2, \quad \text{ and } \quad \gamma_{N_{\mathrm{in}}^{-1}(v_1)}(t_1)=\gamma_{N_{\mathrm{in}}^{-1}(v_2)}(t_2).
	\]
	\end{lem}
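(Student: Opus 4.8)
The plan is to exploit the fact that the scattering relation has already been recovered (Lemma~\ref{lem:bsr_det_sr}) in order to reverse the first geodesic and thereby split the total time $T$ into its two parts.

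First I would fix the geometric picture. Writing $\eta_i=N_{\mathrm{in}}^{-1}(v_i)$ and $z_i=\pi(\eta_i)\in\bS$, the relation $v_1\cB v_2$ means that the geodesics $\gamma_{\eta_1}$ and $\gamma_{\eta_2}$ meet at a common point $p=\gamma_{\eta_1}(t_1)=\gamma_{\eta_2}(t_2)$ with $t_1+t_2=T$. Because $g$ is simple, every geodesic is distance minimizing and any two points are joined by a unique geodesic; hence two geodesics with distinct traces meet in at most one point, and each geodesic is injective on $[0,\exit(\eta_i)]$. Apart from the degenerate case in which $\gamma_{\eta_1}$ and $\gamma_{\eta_2}$ share the same trace (that is $v_2=v_1$ or $v_2=N_{\mathrm{in}}(-\phi_{\exit(\eta_1)}(\eta_1))$), the point $p$, and therefore the pair $(t_1,t_2)=(d(z_1,p),d(z_2,p))$, is uniquely determined. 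In particular $T$ is the unique parameter at which $v_1\cB v_2$ holds, so it is a well-defined function of the data.

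Next I would reverse the first geodesic using information already available. By Lemma~\ref{lem:bsr_det_sr} the broken scattering relations determine the exit time $\exit(\eta_1)$ and the scattering relation, hence the inward vector $\hat\eta_1:=-\phi_{\exit(\eta_1)}(\eta_1)$ based at the exit point of $\gamma_{\eta_1}$, together with its projection $\hat v_1:=N_{\mathrm{in}}(\hat\eta_1)\in B\bS$. The geodesic $\gamma_{\hat\eta_1}$ retraces $\gamma_{\eta_1}$ backwards, $\gamma_{\hat\eta_1}(\tau)=\gamma_{\eta_1}(\exit(\eta_1)-\tau)$, so it meets $\gamma_{\eta_2}$ at the \emph{same} point $p$, now after time $\exit(\eta_1)-t_1$. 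Consequently $\hat v_1\cB v_2$ holds, and by uniqueness of the intersection it holds at the single value $\hat T=(\exit(\eta_1)-t_1)+t_2$, which can again be read off from the broken scattering relations.

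Finally I would solve the resulting linear system. From $T=t_1+t_2$ and $\hat T=\exit(\eta_1)-t_1+t_2$ I obtain
\[
t_1=\tfrac12\big(T-\hat T+\exit(\eta_1)\big),\qquad t_2=\tfrac12\big(T+\hat T-\exit(\eta_1)\big),
\]
each expressed through the quantities $T$, $\hat T$ and $\exit(\eta_1)$, all of which the broken scattering relations supply. The step I expect to require the most care is the uniqueness of the intersection point $p$, since this is exactly what makes $T$ and $\hat T$ well-defined functions of the data rather than merely admissible choices among several; this is where simplicity is essential. The remaining care goes into the boundary and degenerate configurations (coincident traces, or $p\in\bS$ so that $t_1=\exit(\eta_1)$), where the reversal must be read off as a limiting case.
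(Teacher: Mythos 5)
Your proposal is correct and follows essentially the same route as the paper: invoke simplicity so that the intersection point and hence the broken scattering time is unique, use the already-recovered scattering relation and exit time (Lemma~\ref{lem:bsr_det_sr}) to reverse a geodesic, and solve the resulting $2\times 2$ linear system for $t_1,t_2$. The only cosmetic difference is that you reverse $\gamma_{\eta_1}$ and use $\exit(\eta_1)$, whereas the paper reverses $\gamma_{\eta_2}$ and expresses $t_1$ through $T(v_1,v_2)$, $T(v_2,\eta_2)$ and $T(v_1,\eta_2)$; the two linear systems are equivalent.
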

	\begin{proof}
	Since $g$ is simple, each pair of distinct geodesics can intersect at most once, and for any two distinct $v,w\in B\bD$ the sets
	\[
	\mathcal{T}(v,w):=\{T>0; \: v\cB w\}
	\]
	are either empty or singleton sets. 
% 	Since the we have recovered the scattering relation for each $v \in B\bS$ the set 
% 	\[
% 	A_v:=\{v, N_\mathrm{in}(-\phi_{\exit(N_\mathrm{in}^{-1}v)}(N_{\mathrm{in}}^{-1}v))\} \subset B\bS
% 	\]
% 	is known. 
	Thus the function $T\colon B\bS \times B\bS \to \R \cup \{\infty\}$ defined by
    \[
	T(v,w)=
	\left\{
	\begin{array}{lll}
         \mathcal{T}(v,w), &  v\neq w, & \mathcal{T}(v,w)\neq \emptyset
	     \\
	     \infty, &  v\neq w, & \mathcal{T}(v,w)= \emptyset
	     \\
	     \exit(N^{-1}_{\mathrm{in}}(v)), & v=w, &
	\end{array}
	\right.
    \]
	can be found by using the broken scattering relations. In particular $T=T(v_1,v_2)$. 
	
% 		Let $v'\in \inwb{}$ and let $\gamma_{v'}$ be a geodesic such that 
% 		\[
% 		\gamma_{v'}(0)=\gamma_v(\exit(v)), \: \dot{\gamma}_{v'}(t)=-\dot{\gamma}_v(\exit(v)-t).
% 		\]
     As the broken scattering relations determine the scattering relation, we know the directions $\eta_i=N_\mathrm{in}(-\phi_{\exit(N_\mathrm{in}^{-1}v_i)}(N_{\mathrm{in}}^{-1}v_i))\in B\bS$. Thus, there are some numbers $t_1,t_2,s_1,s_2\geq 0$ that satisfy the equations
    \[
    \begin{split}
    \gamma_{N_{\mathrm{in}}^{-1}(v_1)}(t_1)&=\gamma_{N_{\mathrm{in}}^{-1}(v_2)}(t_2)
    =
    \gamma_{N_{\mathrm{in}}^{-1}(\eta_1)}(s_1)=\gamma_{N_{\mathrm{in}}^{-1}(\eta_2)}(s_2),
    \\
    t_1+t_2&=T(v_1,v_2), \qquad  t_1+s_1=T(v_1,\eta_1),
    \\
    t_2+s_2&=T(v_2,\eta_2), \qquad t_1+s_2=T(v_1,\eta_2).
    \end{split}
    \]
    % \[
    % t_2+s_1 =T(v_2,\eta_1), \quad s_1+s_2=T(\eta_1,\eta_2),
    % \]
    % where the right hand sides have already been recovered. If we only consider the first four equations above, we obtain 
    Since the linear system has a unique solution we obtain
    % \[
    % \begin{pmatrix}
    % 1 & 1 & 0 & 0
    % \\
    % 1 & 0 & 1 & 0
    % \\
    % 0 & 1 &0 &1
    % \\
    % 1 & 0 & 0 & 1
    % \end{pmatrix}
    % \begin{pmatrix}
    % t_1
    % \\
    % t_2
    % \\
    % s_1
    % \\
    % s_2
    % \end{pmatrix}
    % =
    % \begin{pmatrix}
    % T(v_1,v_2)
    % \\
    % T(v_1,v_1)
    % \\
    % T(v_2,v_2)
    % \\
    % T(v_1,\eta_2)
    % \end{pmatrix},
    % \]
%    is invertible, with an inverse 
%     \[
%   \frac{1}{2} 
%   \begin{pmatrix}
%     1 & 0 & -1 & 1
%     \\
%     1 & 0 & 1 & -1
%     \\
%     -1 & 2 &1 & -1
%     \\
%     -1 & 0 & 1 & 1
%     \end{pmatrix}.
%     \]
    \[
    \begin{split}
    t_1&=\frac{1}{2}\left( T(v_1,v_2)-T(v_2,\eta_2)+T(v_1,\eta_2) \right) 
    \quad\text{and}
    \\
    t_2&=T(v_1, v_2)-t_1. 
    \end{split}
    \]
    This completes the proof.
	\end{proof}

    We are ready to prove Proposition~\ref{prop:BSR_to_travel_times}.

	\begin{proof}[Proof of Proposition~\ref{prop:BSR_to_travel_times}]
    First we assume that $g$ is a simple metric on $\bD$. Recall that $\nu$ is the inward pointing unit normal vector field on $\bS$ with respect to this metric. We define the set
	\[
	\mathcal{D}:=\{(s_0,z_0)\in \R \times \bS: \: s_0\in [0,\exit(\nu(z_0))]\},
	\]
and the map
\begin{equation}
\label{eq:map_E}
	E\colon \mathcal{D} \to \bD, \quad E(s_0,z_0)=\exp_{z_0}(s_0\nu(z_0)).
\end{equation}
	
	Let $(z_0,s_0)\in \mathcal D$. We define the set $V(\nu(z_0))$ as in equation \eqref{eq:set_of_focusing_directions}. 
% 	Then we define a function
% 	\[
% 	t\colon V(\nu)\to \R, \quad t(v)=T(v,\nu),
% 	\]
% 	where function $T(\cdot,\cdot)$ is as in the formula  \eqref{eq:focusing_function}. 
	Since for each $v \in V(\nu(z_0))$ there exists a unique $T>0$ that satisfies the relation $N_{\mathrm{in}}(\nu(z_0))\cB v$, we can apply Lemma~\ref{lem:travel_time_bsr} to find two functions
	$
	t_{z_0},s_{z_0}\colon V(\nu(z_0))\to \R
	$
    that satisfy the equation
    $
    \gamma_{\nu(z_0)}(s_{z_0}(v))=\gamma_{v}(t_{z_0}(v)) \: \text{ for every } v \in V(\nu(z_0)).
    $
    Then we study the subset
    \[
    V(s_0,z_0)=\{v \in V(\nu(z_0)):\: s_{z_0}(v)=s_0\},
    \]
    and note that by simplicity this set is an embedded submanifold of $B\bS$, and the restriction of the canonical projection $\pi \colon B\bS \to \bS$ on $V(s_0,z_0)$ is a diffeomorphism onto $\bS$. Moreover, we have
\[
    t_{z_0}(\pi|_{V(s_0,z_0)}^{-1}(q))=d(E(s_0,z_0),q) \quad \text{ for every } q \in \bS.
\]
	
% 	As in the previous discussion, for each $(z_0,s_0)\in \mathcal{D}$ we first find the set $V(s_0,z_0)$, 
% 	Finally, after identifying the set $V(s_0,z_0)$ with $\bS$ we observe that the function $t_{z_0}\colon \bS \to \R$ satisfies the equation
% 	\[
% 	t_{z_0}(q)=d(E(s_0,z_0),q), \quad \text{for every } q \in \bS.
% 	\]
% 	By \cite[Lemma 3.30]{Katchalov2001} the functions $t_{z_0,s_0}$ and $t_{z_0',s_0'}$ coincide for some $(z_0,s_0), (z_0',s_0') \in \mathcal{D}$ if and only if
% 	\[
% 	E(z_0,s_0)=E(z_0',s_0').
% 	\]
	Since $(s_0,z_0)\in \mathcal{D}$ was arbitrary and the map $E$ is onto, we have
	\[
	\{t_{z_0}\circ \pi|^{-1}_{V(s_0,z_0)} \colon \bS \to \R: \: (z_0,s_0)\in \mathcal{D}\}=\cR(\bD).
	\]

	Let the metrics $g_1$ and $g_2$ be as in the assumption of this theorem. Since the first fundamental forms of these metric agree on $\bS$, they have the same unit normal vector fields on $\bS$. Since the broken scattering relations of these metrics also coincide, all the objects constructed in this and the preceding proofs are the same for both of these metrics. Therefore the travel time data of these metrics coincide. The proof is complete.
	\end{proof}
	
	\section{Diffeomorphism Invariant Data}
	\label{sec:DITTD}

	If $\Phi\colon \bD \to \bD$ is a diffeomorphism that does not need to be an identity on the boundary $\bS$, and $g_1$ is a simple metric on $\bD$, then the simple pullback metric $g_2:=\Phi^\ast g_1$ satisfies the equation
    \begin{equation*}
    \label{eq:pullback_dist}
	d_2(p,q)=d_1(\Phi(p),\Phi(q)) %\qquad \text{for all } p,q \in \bD.
	\end{equation*}
	for all $p,q \in \bD$.
    However, the equation $\cR_1(\bD)=\cR_2(\bD)$ does not need to be true and in general the travel time data of Definition~\ref{defn:TTD} is not fully diffeomorphism invariant. 
    
    %For instance, let $\Phi$ be a diffeomorphism of $\mathbb D^2$ for which $\Psi(-e_1)\neq -\Psi(e_1)$. If $g_1$ is the Euclidean metric then the points $e_1,-e_1 \in \mathbb S$ are extreme points of the function $d_1(q,\cdot)|_{\mathbb S}$ exactly when $q \in \mathbb D^2$ is on the $e_1$-axis.
    %We set $g_2:=\Phi^\ast g_1$ and choose any $p \in \mathbb D^2$ such that $\Phi(p)\neq 0$. Now if $e_1\in \bS$ is one of the extreme points of the travel time function $d_2(p,\cdot)|_{\mathbb S}$, then by the equation \eqref{eq:pullback_dist} the other extreme point is $\Phi^{-1}(-\Phi(e_1))\neq -e_1$. Thus there are many $q \in \mathbb D^2 $ such that the travel time function $d_1(q,\cdot)|_{\mathbb{S}}$ has its extreme points at $e_1$ and $-e_1$, but only one $p \in \mathbb D^2$, $\Phi(p)=0$, such that $e_1,-e_1$ are some extreme points of the travel time function $d_2(p,\cdot)|_{\mathbb{S}}$. Therefore $\cR_1(\bD)\neq\cR_2(\bD)$.
    For instance, set $z=(1,0)$ and let $\Phi$ be a diffeomorphism of $\mathbb D^2$ for which $\Psi(-z)\neq -\Psi(z)$. Let $g_1$ be the Euclidean metric and $g_2=\Psi^*g_1$. There are many such $q \in \mathbb D^2 $ that the travel time function $d_1(q,\cdot)|_{\mathbb{S}}$ has its extreme values at $\pm z$, but only one $p\in \mathbb D^2$ (namely, $p=\Phi^{-1}(0)$), for which $\pm z$ are both extreme points of $d_2(p,\cdot)|_{\mathbb{S}}$. Therefore $\cR_1(\bD)\neq\cR_2(\bD)$.

%     On the other hand the equality 
% 	\[
% 	d_2(p,q)=d_1(p,q), \quad p,q \in \bD
% 	\]
% 	holds if and only if $\Phi\colon (\bD,d_1) \to (\bD,d_1)$ is a metric isometry. 
	If we set 
	%$\phi\colon \bS \to \bS$ to be the restriction of $\Phi$ on $\bS$, then
    % \[
    % d_1(p,z)=d_2(\Phi^{-1}(p),\phi^{-1}(z)), \quad \text{ for all } p\in \bD, \: z \in \bS.
    % \]
    % Whence, there exits 
    a diffeomorphism $\psi \colon \bS \to \bS$ to be the restriction of $\Phi^{-1}$ on $\bS$, we obtain 
    \[
    \cR_1(\bD)=\{d_2(q,\psi(\cdot))\colon \bS \to \R | \: q \in \bD\}=:\cR_{2,\psi}(\bD)
    \]
    and
    \[
    \cD_1(\bD)=\{d_2(q,\psi(\cdot))-d_2(q,\psi(\cdot))\colon \bS\times \bS \to \R | \: q \in \bD\}=:\cD_{2,\psi}(\bD).
    \]
    
    These suggest the following diffeomorphism invariant definitions for the difference of the travel time and of the travel time difference data.
    
    \begin{defn}
	Let $g_1$ and $g_2$ be two simple Riemannian metrics on $\bD$. We set the distance of their travel time data to be the number
	\[
	\cR(g_1,g_2):=\inf_{\psi}\{\hd{C(\bS)}(\cR_1(\bD),\cR_{2,\psi}(\bD)) | \:\: \psi \text{ is a diffeomorphism of } \bS\}.
	\]
	The distance of the travel time difference data of the metrics $g_1$ and $g_2$ is
	\[
	\cD(g_1,g_2):=\inf_{\psi}\{\hd{C(\bS)}(\cD_1(\bD),\cD_{2,\psi}(\bD)) | \:\: \psi \text{ is a diffeomorphism of } \bS\}.
	\]
	\end{defn}
	
    The last theorem of this paper is a diffeomorphism invariant version of Theorems~\ref{thm:stability} and~\ref{thm:stability_of_TTDD}.
    
    \begin{thm}[Stability of the Diffeomorphism Invariant Data]
	\label{thm:stability_2}
	Let $n\geq 2$, and let~$g_1$ and~$g_2$ be two simple Riemannian metrics of~$\bD$. Then
	\begin{equation}
		\label{eq:metric_in_eq_3}
		\begin{split}
		\ghd{}((\bD,g_1),(\bD,g_2)) &\leq \cR(g_1,g_2) \quad \text{and}\\
		\ghd{}((\bD,g_1),(\bD,g_2)) &\leq \cD(g_1,g_2).
		\end{split}
		\end{equation}
	In particular, $\cR(g_1,g_2)=0$ or $\cD(g_1,g_2)=0$ if and only if the metrics $g_1$ and $g_2$ are isometric.
	\end{thm}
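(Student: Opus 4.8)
The plan is to reduce the statement to the two isometry propositions already established, by observing that for each fixed boundary diffeomorphism $\psi$ both metric spaces embed isometrically into one and the same Banach space, so that a single Hausdorff distance bounds the Gromov--Hausdorff distance.

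First I would fix a diffeomorphism $\psi\colon\bS\to\bS$ and consider the map $\cR_{2,\psi}\colon(\bD,d_2)\to C(\bS)$ defined by $\cR_{2,\psi}(q)=d_2(q,\psi(\cdot))$. The crucial observation is that this is again a metric isometry onto its image: for all $q,q'\in\bD$,
\[
\begin{split}
\|\cR_{2,\psi}(q)-\cR_{2,\psi}(q')\|_\infty
&=\sup_{z\in\bS}|d_2(q,\psi(z))-d_2(q',\psi(z))|
\\
&=\sup_{w\in\bS}|d_2(q,w)-d_2(q',w)|
=d_2(q,q'),
\end{split}
\]
where the middle equality uses that $\psi$ is a bijection of $\bS$ and the last equality is Proposition~\ref{prop:isometry}. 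Hence $\cR_1\colon(\bD,g_1)\to C(\bS)$ and $\cR_{2,\psi}\colon(\bD,g_2)\to C(\bS)$ are two isometric embeddings into the common metric space $Z=C(\bS)$. Feeding this particular $Z$ and this particular pair of embeddings into the infimum defining the Gromov--Hausdorff distance gives
\[
\ghd{}((\bD,g_1),(\bD,g_2))\leq \hd{C(\bS)}(\cR_1(\bD),\cR_{2,\psi}(\bD)),
\]
and taking the infimum over all $\psi$ yields the first inequality in~\eqref{eq:metric_in_eq_3}. The second inequality is proved in exactly the same way: the map $\cD_{2,\psi}(q)(z,w)=\frac12(d_2(q,\psi(z))-d_2(q,\psi(w)))$ is a metric isometry of $(\bD,d_2)$ into $C(\bS\times\bS)$ by Proposition~\ref{prop:D_is_isometry} together with the fact that $\psi\times\psi$ permutes $\bS\times\bS$.

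For the ``in particular'' assertion, one direction is immediate: if $\cR(g_1,g_2)=0$ or $\cD(g_1,g_2)=0$, then the inequalities just established force $\ghd{}((\bD,g_1),(\bD,g_2))=0$, so the two compact metric spaces are isometric, and the Myers--Steenrod theorem upgrades this metric isometry to a smooth Riemannian isometry. For the converse I would start from a Riemannian isometry $\Phi\colon(\bD,g_1)\to(\bD,g_2)$. Since a Riemannian isometry maps the boundary diffeomorphically onto the boundary, $\psi:=\Phi|_{\bS}$ is a diffeomorphism of $\bS$, and the relation $d_1(p,z)=d_2(\Phi(p),\Phi(z))=d_2(\Phi(p),\psi(z))$ for $z\in\bS$ gives $\cR_1(p)=\cR_{2,\psi}(\Phi(p))$; as $\Phi$ is onto, $\cR_1(\bD)=\cR_{2,\psi}(\bD)$ and thus $\cR(g_1,g_2)=0$. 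The identical choice of $\psi$ yields $\cD_1(\bD)=\cD_{2,\psi}(\bD)$, so $\cD(g_1,g_2)=0$ as well.

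The only point requiring genuine care is the verification that $\cR_{2,\psi}$ and $\cD_{2,\psi}$ remain isometries after precomposing the boundary argument with $\psi$; this is precisely where bijectivity of $\psi$ on $\bS$ (respectively of $\psi\times\psi$ on $\bS\times\bS$) enters. Beyond that, the argument is a direct unwinding of the definition of the Gromov--Hausdorff distance combined with Propositions~\ref{prop:isometry} and~\ref{prop:D_is_isometry}, so I do not anticipate a serious obstacle.
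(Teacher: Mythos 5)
Your proposal is correct and follows essentially the same route as the paper: both arguments embed $(\bD,d_1)$ and $(\bD,d_2)$ isometrically into the common Banach space via $\cR_1$ and the $\psi$-twisted map $\cR_{2,\psi}$ (the paper writes this as $\Psi\circ\cR_2$ with $\Psi(f)=f\circ\psi$), invoke Propositions~\ref{prop:isometry} and~\ref{prop:D_is_isometry}, and conclude from the definition of the Gromov--Hausdorff distance, with Myers--Steenrod upgrading the metric isometry at the end. The only cosmetic difference is that you spell out the converse implication (isometric $\Rightarrow$ data distance zero) inside the proof, whereas the paper relies on the discussion at the start of Section~\ref{sec:DITTD} for that direction.
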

	\begin{proof}
	We only provide the proof for the travel time data since the proof for the travel time difference data is analogous. For every $\eps>0$ there exists a diffeomorphism $\psi\colon \bS \to \bS$ satisfying the inequality
    \[
    \hd{C(\bS)}(\cR_1(\bD),\cR_{2,\psi}(\bD))<\cR(g_1,g_2)+\eps.
    \]
    If we define a map $\Psi\colon C(\bS) \to C(\bS)$ by the formula
    $
    \Psi(f)=f\circ \psi,
    $
    then Proposition~\ref{prop:isometry} yields that the maps 
    \[
    \begin{split}
    \cR_1&\colon (\bD,d_1)\to (C(\bS),\|\cdot\|_\infty)
    \quad \text{and}
    \\
    \Psi \circ \cR_2 &\colon (\bD,d_2)\to (C(\bS),\|\cdot\|_\infty) 
    \end{split}
    \]
    are metric isometries whose image sets are $\cR_1(\bD)$ and $\cR_{2,\psi}(\bD)$, respectively. Since $\eps>0$ can be chosen arbitrarily, the definition of Gromov--Hausdorff distance gives the inequality \eqref{eq:metric_in_eq_3}.
    
    If $\cR(g_1,g_2)=0$, then by the inequality \eqref{eq:metric_in_eq_3} the metric spaces $(\bD,d_1)$ and $(\bD,d_2)$ are isometric. Whence, the last claim follows from the Myers--Steenrod theorem.
	\end{proof}
	
	\bibliographystyle{abbrv}
    \bibliography{bibliography}
    
    \end{document}